\numberwithin{equation}{section}
\theoremstyle{plain}
\newtheorem{theorem}{Theorem}[section]
\newtheorem{lemma}[theorem]{Lemma}
\newtheorem{corollary}[theorem]{Corollary}
\theoremstyle{definition}
\newtheorem{definition}[theorem]{Definition}
\newtheorem{remark}[theorem]{Remark}
\newtheorem{example}[theorem]{Example}
\newcommand*{\id}{{\mathrm{id}}}
\newcommand*{\R}{{\mathbb R}}                                                 
\newcommand*{\CC}{{\mathbb{C}}}                                               
\newcommand*{\Hb}{{\mathbb H}}                                                
\newcommand*{\Vb}{{\mathbb V}}
\newcommand*{\Wb}{{\mathbb W}}
\newcommand*{\rd}{{\mathrm d}}                                                
\newcommand*{\Gr}{{\mathrm{Gr}}} 
\newcommand{\pa}{\partial}
\newcommand{\tP}{\tilde{P}}
\newcommand{\tG}{\tilde{G}}
\newcommand{\tg}{\tilde{g}}
\newcommand{\tQ}{\tilde{Q}}
\begin{document}
\title[Non-commutative quintic nonlinear Schr\"odinger equation]{Integrability of local and nonlocal non-commutative fourth order quintic nonlinear Schr\"odinger equations}
\author{Simon J.A. Malham}
\date{13th July 2021}
\address{Maxwell Institute for Mathematical Sciences,        
and School of Mathematical and Computer Sciences,   
Heriot-Watt University, Edinburgh EH14 4AS}
\email{S.J.A.Malham@hw.ac.uk}

\begin{abstract}
We prove integrability of a generalised non-commutative fourth order quintic nonlinear Schr\"odinger equation.
The proof is relatively succinct and rooted in the linearisation method pioneered by Ch. P\"oppe.
It is based on solving the corresponding linearised partial differential system to generate
an evolutionary Hankel operator for the `scattering data'. The time-evolutionary solution
to the non-commutative nonlinear partial differential system is then generated by solving 
a linear Fredholm equation which corresponds to the Marchenko equation.
The integrability of reverse space-time and reverse time nonlocal versions,
in the sense of Ablowitz and Musslimani \cite{AM},
of the fourth order quintic nonlinear Schr\"odinger equation are proved contiguously by the approach adopted.
Further, we implement a numerical integration scheme based on the analytical approach
above which involves solving the linearised partial differential system followed by
numerically solving the linear Fredholm equation to generate the solution at any given time. 
\end{abstract}

\maketitle

\section{Introduction}
We prove that a generalised non-commutative fourth order quintic nonlinear Schr\"odinger equation is integrable. 
Here `integrable' means the equation can be linearised.
Precisely though briefly, given time-evolutionary solutions 
to the corresponding linearised equation,
we can generate corresponding solutions to the original nonlinear equation
at any given time by solving a linear integral Fredholm equation at that time.   
The Fredholm equation in question corresponds to the Marchenko equation.
This approach to finding solutions to classical integrable systems such as
the sine--Gordon, Korteweg de Vries, modified Korteweg de Vries, the whole
associated Korteweg de Vries hierarchy and also the nonlinear Schr\"odinger equation
was pioneered by Ch. P\"oppe in a sequence of papers, see P\"oppe~\cite{P83,P84,P-KP},
P\"oppe and Sattinger~\cite{PS88} and Bauhardt and P\"oppe~\cite{BP-ZS}.
Recently Doikou \textit{et al.\/} \cite{DMSW20,DMS20} extended P\"oppe's approach.
First they demonstrated for the Korteweg de Vries and nonlinear Schr\"odinger equation,
only P\"oppe's celebrated kernel product rule is required for the approach to work,
see Doikou \textit{et al.\/} \cite{DMSW20}. Second they demonstrated the approach,
as considered by Bauhardt and P\"oppe~\cite{BP-ZS},
is naturally non-commutative and extends to the non-commutative nonlinear Schr\"odinger
and non-commutative modified Korteweg de Vries equations, see Doikou \textit{et al.\/} \cite{DMS20}.
They also show how the method also naturally extends to nonlocal versions of
these equations in the sense given in Ablowitz and Musslimani~\cite{AM},
i.e.\/ where the nonlocality consists of reverse space-time or
reverse time fields as factors in the nonlinear terms. 
The results herein extend the solution method developed in  
Doikou \textit{et al.\/} \cite{DMS20}, non-trivially,
to the fourth order non-commutative case.

Let us explain P\"oppe's approach in some more detail.
Consider a nonlinear complex matrix-valued partial differential equation
for $g=g(x;t)$ of the form:
\begin{equation*}
\pa_tg=d(\pa)g+\Psi(g,\pa g,\pa^2 g,\ldots),
\end{equation*}
where $\pa=\pa_x$. Here we suppose $d=d(\pa)$ is a constant coefficient polynomial in $\pa$,
while $\Psi$ is a precise homogeneous non-commutative polynomial function of $g$ and
its partial derivatives up to an order two less than the degree of $d$.
Without loss of generality we assume we have subsumed any homogeneous linear terms
in $\Psi$ into the term $d(\pa)g$. Hence $\Psi$ encodes all the nonlinear terms
in the partial differential equation shown.
Setting $\Psi=O$, the zero matrix, consider the corresponding linear partial differential equation
for the complex matrix-valued function $p=p(x;t)$ as follows:
\begin{equation*}
\pa_tp=d(\pa)p. 
\end{equation*}
The quantity $p=p(x;t)$ represents the `scattering data'. The first step in P\"oppe's
approach is to elevate the Marchenko equation to the operator level. 
We construct a Hankel operator $P=P(x,t)$ associated with the scattering data as follows.
We assume $P$ is a Hilbert--Schmidt operator with integral kernel given by
$p=p(y+z+x;t)$ so that for any square-integrable function $\phi$:
\begin{equation*}
  (P\phi)(y;x,t)\coloneqq\int_{-\infty}^0 p(y+z+x;t)\phi(z)\,\mathrm{d}z.
\end{equation*}
Note that $P=P(x,t)$ satisfies the operator differential equation $\pa_tP=d(\pa)P$. 
We then define an associated `data' operator $Q=Q(x,t)$ by $Q\coloneqq P^\dag P$,
where $P^\dag$ is the adjoint operator to $P$. This precise assignment for $Q$ does depend
on the application at hand. For the applications herein we make the choice stated,
guided by Ablowitz \textit{et al.\/} \cite{ARS}. The crucial classical ingredient
is the  Marchenko equation and here, at the operator level, this has the form:
\begin{equation*}
  P=G(\id+Q). 
\end{equation*}
This is a linear Fredholm equation for the operator $G=G(x,t)$.
Hence to recap, the three key ingredients in the first step in P\"oppe's approach
are the: (i) Operator differential equation for $P=P(x,t)$; (ii) Assignment
for the auxiliary data operator $Q=Q(x,t)$ and (iii) Linear Fredholm equation for $G=G(x,t)$.

The second step in P\"oppe's approach, and the major underlying insight,
is the `kernel product rule', in which the Hankel property of $P$ plays a crucial role.
Suppose $F=F(x,t)$ is a Hilbert--Schmidt linear operator with
kernel $f(y,z;x,t)$. Note we assume $f$ depends on the parameters $x$ and $t$.
For example recall from above $P=P(x,t)$ is a Hankel operator with kernel  $p=p(y+z+x;t)$.
Let us denote by $[F]$ the kernel of $F$, i.e.\/ $[F]=f$. 
Now suppose $F=F(x,t)$ and $F'=F'(x,t)$ are Hilbert--Schmidt operators
with kernels continuous in $x$. In addition suppose $H$ and $H'$ are
Hilbert--Schmidt \emph{Hankel} operators with kernels continuously differentiable in $x$. 
Then the fundamental theorem of calculus implies
\begin{equation*}
\bigl[F\pa_x(HH')F'\bigr](y,z;x,t)=[FH](y,0;x,t)[H'F'](0,z;x,t).
\end{equation*}
This is the crucial `kernel product rule' composing the second step in P\"oppe's approach.
More precisely, P\"oppe used the `trace' form of this rule evaluated at $y=z=0$.
We prefer to delay this specialisation until the final step in the procedure.
The kernel product rule is the only property we use
in Doikou \textit{et al.\/} \cite{DMSW20,DMS20} and herein.

The third and final step is to compute $\pa_tG-d(\pa)G$ where
from the linear Fredholm equation above $G=PU$ with $U\coloneqq(\id+Q)^{-1}$.
And we then apply the kernel bracket operator $[\,\cdot\,]$.
The basic calculus property $\pa U=-U(\pa Q)U$ initiates the generation of
nonlinear terms. The goal is then to use \emph{only} the kernel product rule
to establish a `closed form' for the nonlinear terms generated. 
By a `closed form' we mean the terms generated represent
a constant coefficient non-commutative polynomial in
$[G]$, $\pa[G]$, $\pa^2[G]$ and so forth. Hence for example, if
$d(\pa)=\mu_4\pa^4$ and $\mu_4$ is a pure imaginary constant parameter  
then in our main Theorem~\ref{thm:main} we show if $P=P(x,t)$ satisfies
$\pa_tP=d(\pa)P$ and $Q=P^\dag P$, then $[G]$ 
satisfies the non-commutative nonlinear partial differential equation,
\begin{align*}
\pa_t[G]-\mu_4\pa^4[G]
=&\;2\mu_4\Bigl(2\bigl(\pa^2[G]\bigr)[G]^\dag[G]+[G]\bigl(\pa^2[G]^\dag\bigr)[G]+2[G][G]^\dag\bigl(\pa^2[G]\bigr)\\
&\;+\bigl(\pa[G]\bigr)\bigl(\pa[G]^\dag\bigr)[G]+3\bigl(\pa[G]\bigr)[G]^\dag\bigl(\pa[G]\bigr)
+[G]\bigl(\pa[G]^\dag\bigr)\bigl(\pa[G]\bigr)\\
&\;+3[G][G]^\dag[G][G]^\dag[G]\Bigr).
\end{align*}
In the above, $P^\dag$ is the operator adjoint to $P$ while $[G]^\dag$ is the kernel function
corresponding to the complex-conjugate transpose of the kernel function $[G]$.
In the formulation above we have suppressed the parameter dependence of both the
kernels $[G]=[G](y,z;x,t)$ and the kernels of the nonlinear terms---recall the 
kernel product rule above. Hence for example two applications of the kernel
product rule led to the first term on the right which in full should read 
\begin{equation*}
4\mu_4\bigl(\pa^2[G](y,0;x,t)\bigr)[G]^\dag(0,0;x,t)[G](0,z;x,t),
\end{equation*}
and so forth for the other cubic terms. Four applications of the
kernel product rule generated the quintic term whose left and right factors
should have parameter dependencies matching those of the corresponding factors
in the cubic term above, and whose three central factors
should have the parameter dependence $(0,0;x,t)$. By a standard convention
we invoke, these dependencies are implied in the non-commutative equation above.
We now emphasise that we can set $y=z=0$ throughout so that all the terms have
the parameter dependence $(0,0;x,t)$. This generates the 
non-commutative fourth order quintic nonlinear Schr\"odinger equation.
Furthermore the solution to this equation is generated as follows.
We solve the \emph{linear} partial differential equation, namely $\pa_tp=\mu_4\pa^4p$.
This can be achieved analytically.
The solution function $p$ generates the kernel of the Hankel operator $P=P(x,t)$.
We set $Q=P^\dag P$, this involves computing an integral whose integrand is a known function.
We can then compute the solution to the non-commutative fourth order quintic nonlinear Schr\"odinger equation for
$[G]=[G](y,z;x,t)$ shown above by solving the \emph{linear} Fredholm equation $P=G(\id+Q)$ for $G$.
Hence the quintic nonlinear Schr\"odinger above is linearsiable and thus integrable in this sense.

As another example, consider the case of a non-commutative fourth order quintic nonlinear Schr\"odinger equation
with a nonlocal nonlinearity as follows. Suppose the Hankel operator $P=P(x,t)$ satisfies
the same partial differential equation $\pa_tP=\mu_4\pa^4P$ as in the example just above
with $\mu_4$ a pure imaginary constant parameter. However, we now specify that $Q=\tilde{P}P$
where the operator $\tilde{P}=\tilde{P}(x,t)$ is given by $\tilde{P}(x,t)=P^{\mathrm{T}}(-x,-t)$,
where $P^{\mathrm{T}}$ is the operator whose matrix kernel is the transpose of the matrix kernel
corresponding to $P$. Then $[G]$ satisfies an analogous non-commutative nonlinear partial differential equation
to that shown above, except that the terms $[G]^\dag$ on the right-hand side are replaced by
$[\tilde{G}]$ where $\tilde{G}=\tilde{G}(x,t)$ is given by $\tilde{G}(x,t)=G^{\mathrm{T}}(-x,-t)$.
As in the last example, $[G]=[G](y,z;x,t)$, and we can set $y=z=0$ throughout so that
all the terms have the parameter dependence $(0,0;x,t)$. This generates the 
reverse space-time nonlocal non-commutative fourth order quintic nonlinear Schr\"odinger equation;
see Example~\ref{ex:reversespace-time} in the main text.
The solution to this equation can be generated
in an analogous manner to that described in the example just above,
demonstrating the equation is linearisable and thus integrable.

In Doikou \textit{et al.\/} \cite{DMSW20} with $d(\pa)=\mu_2\pa^2$ and $\mu_2$
a constant pure imaginary parameter, we generated the solution to the
nonlinear Schr\"odinger equation in this way. With $d(\pa)=\mu_3\pa^3$
and $\mu_3$ a real parameter, and a slight modification of the procedure above,
we generated solutions to the Korteweg de Vries equation.
Then in Doikou \textit{et al.\/} \cite{DMS20} we generalised this approach
to the non-commutative setting and also generated solutions to the
non-commutative modified Korteweg de Vries in this way from $d(\pa)=\mu_3\pa^3$.
Note, with the solution procedure described above, the specific choices
of $d=d(\pa)$ indicated, generate precise non-commutative polynomial functions $\Psi$
representing the nonlinear terms. In preceding work, Beck \textit{et al.\/}
\cite{BDMSI,BDMSII} assume the kernels $p=p(y,z;t)$ and $q=q(y,z;t)$
associated with the operators $P$ and $Q$ satisfy a coupled
pair of linear partial differential equations. They show the kernel $[G]=[G](y,z;t)$
associated with the operator $G$ solving the linear Fredholm equation $P=G(\id+Q)$
satisfies a Riccati partial differential equation which can be interpreted as
a nonlocal nonlinear partial differential equation. For example
Beck \textit{et al.\/} \cite{BDMSII} generate solutions to the following
nonlocal Korteweg de Vries equation for $[G]=[G](y,z;t)$ using this approach:
\begin{equation*}
\pa_t[G](y,z;t)-\pa^3_y[G](y,z;t)=\int_{\R}[G](y,\xi;t)\bigl(\pa_\xi[G](\xi,z;t)\bigr)\,\rd\xi.
\end{equation*}
The nonlocal nonlinearity is the realisation at the kernel level of a linear operator product, and
the kernel product rule is not used. All the nonlinear flows in Beck \textit{et al.\/}
\cite{BDMSI,BDMSII} and Doikou \textit{et al.\/} \cite{DMSW20,DMS20} are shown to be
Grassmannian flows. Indeed the theory in Doikou \textit{et al.\/} \cite[Section~2.3]{DMS20}
establishes the flow generated in our main Theorem~\ref{thm:main} is also a Grassmannian flow. 

In actuality, we consider an inflated coupled linear system, one which includes
the linear partial differential equation $\pa_tP=d(\pa)P$ for $P=P(x,t)$,
but more generally assigns $Q\coloneqq\tP P$ where $\tP=\tP(x,t)$ is a linear operator analogous
to $P$ satisfying an associated linear partial differential equation $\pa_t\tP=\tilde{d}(\pa)\tP$.
Here $\tilde{d}=\tilde{d}(\pa)$ is a constant coefficient
polynomial in $\pa$ analogous to $d(\pa)$, of the same degree.
We correspondingly assign $\tQ\coloneqq P\tP$.
And finally in addition to $P=G(\id+Q)$ we now include the analogous linear Fredholm equation
$\tP=\tG(\id+\tQ)$. See Definition~\ref{def:linearoperatorsystem} for the inflated linear system.
The inflated system also naturally generates a Grassmannian flow; see Doikou \textit{et al.\/} \cite{DMS20}.
Naturally we must assume the complex matrix-valued kernels of $P$ and $\tP$ are commensurate
so that $Q$ and $\tQ$ make sense. Consequently with suitable restrictions on $\tilde{d}(\pa)$
we can choose $\tP=P^\dag$ as above. Or for example, we can also consistently choose
$\tP(x,t)=P^{\mathrm{T}}(-x,-t)$ where $P^{\mathrm{T}}$ is the linear operator whose kernel
is the transpose of the kernel for $P$. In this case $\tG(x,t)=G^{\mathrm{T}}(-x,-t)$
and we generate a non-commutative quintic equation like that above with $[G]^\dag$
replaced by $G^{\mathrm{T}}(-x,-t)$ everywhere. Thus the corresponding
reverse space-time nonlocal non-commutative quintic equation, in the sense
of Ablowitz and Musslimani~\cite{AM}, is linearisable. 

We also develop a numerical method for solving such integrable systems,
first explored in Doikou \textit{et al.} \cite{DMSW20}. 
The numerical method applies to any of the integrable systems
such as the non-commutative nonlinear Schr\"odinger and modified Korteweg de Vries equations considered
in Doikou \textit{et al.\/} \cite{DMS20}, as well as the non-commutative
fourth order quintic equation above, or indeed, the more
general non-commutative fourth order quintic equation which we establish
is linearisable and integrable as our main result in Theorem~\ref{thm:main}.
We can analytically solve the linearised partial differential system
for the Hankel operator $P=P(x,t)$ or indeed its corresponding kernel $p=p(x,t)$,
in Fourier space. This is because $p$ satisfies the linear partial differential equation
$\pa_t=d(\pa)p$, where $d(\pa)$ is a constant coefficient polynomial in $\pa$.
Hence in principle we can evaluate $p=p(x,t)$ at any given time $t>0$, or in practice
we can represent it to any degree of accuracy determined by the finite number
of Fourier modes we choose to represent the Fourier series of $p=p(x,t)$,
on a truncated domain.
We can then determine the kernel function $q=q(y,z;x,t)$ corresponding to 
the associated data operator $Q=Q(x,t)$ by evaluating $Q=P^\dag P$ by computing,
\begin{equation*}
q(y,z;x,t)=\int_{-\infty}^0p^\dag(y+\zeta+x;t)p(\zeta+z+x;t)\,\rd\zeta.
\end{equation*}
Note $P^\dag$ denotes the adjoint operator to $P$, while for the kernel function $p^\dag$
denotes the complex conjugate transpose to the complex matrix-valued function $p$.
Using our representation for $p$ we can approximate the integral on the right-hand side
to compute an approximation for $q$. To compute the operator $G$ we need to solve
the Fredholm equation $P=G(\id+Q)$, i.e.\/ if $g=g(y,z;x,t)$ is the kernel corresponding
to $G$, we numerically solve the Fredholm integral equation
\begin{equation*}
p(y+z+x;t)=g(y,z;x,t)+\int_{-\infty}^0g(y,\zeta;x,t)q(\zeta,z;x,t\,\rd\zeta.
\end{equation*}
We achieve this by approximating the integral on the right-hand side by a Riemann sum
and solving the resulting large linear algebraic system of equations.
We then set $y=z=0$ to determine $g(0,0;x,t)$, the kernel correspondng to $[G](0,0;x,t)$.
The method works for any given initial data function $p_0=p_0(x)$ for which $p(x,0)=p_0(x)$.
In principle, given any initial data function $g_0=g_0(x)$ for which $g(0,0;x,0)=g_0(x)$
we could compute $p(x,0)$ via `scattering' methods, however we do not implement this here.
In general the overall numerical procedure we implement is straightforward, and in practice,
appears to be robust. 

In addition to the series of papers by P\"oppe mentioned above,
the work herein was also motiviated by Ablowitz \textit{et al.\/} \cite{ARS},
Dyson \cite{Dyson} and McKean \cite{McKean}. We also mention in this
context Ercolani and McKean \cite{EM} and Mumford~\cite[p.~3.239]{Mumford}.
The Marchenko equation is central not only to P\"oppe's approach,
but also to that of Fokas and Ablowitz~\cite{FA},
Nijhoff \textit{et al.\/} \cite{NQLC} and the Zakharov--Shabat scheme \cite{ZS,ZS2}.
Details of Fokas' unified transform method can be found for example in Fokas and Pelloni~\cite{FP}.
Hankel operators have received a lot of recent attention, see Grudsky and Rybkin \cite{GR1, GR2},
Grellier and Gerard~\cite{Gerard} and Blower and Newsham \cite{BN}.
Nonlocal integrable systems have also received a lot of recent attention,
see Ablowitz and Musslimani \cite{AM}, Fokas \cite{F2016}, Grahovski, Mohammed and Susanto \cite{GMS}
and G\"urses and Pekcan~\cite{GP-NLNLS-mKdV,GP-NLmKdV,GP-NLNLS,GP-NLKdV}.
The scalar reverse time nonlocal nonlinear Schr\"odinger equation is $PT$ symmetric,
and was derived in Ablowitz and Musslimani~\cite{AM2013} ``with physical intuition''.
Indeed, Ablowitz and Musslimani~\cite{AM2019} establish
``an important physical connection between the recently discovered
nonlocal integrable reductions of the AKNS system and physically interesting equations''.
Further, quoting from Gerdjikov and Saxena~\cite{GS}, ``nonlocal, nonlinear equations arise
in a variety of physical contexts ranging from hydrodynamics to optics to condensed matter
and high energy physics''. See Lou and Huang~\cite{LH} for a derivation of
the nonlocal `Alice--Bob Korteweg de Vries' system from a system of equations modelling
atmospheric dynamics.
A local non-commutative fourth order quintic nonlinear Schr\"odinger equation corresponding to that above can be
found in Nijhoff \textit{et al.\/} \cite[eq.~B.4a]{NQLC} who establish integrability
via the method pioneered by Fokas and Ablowitz~\cite{FA}.
Indeed Nijhoff \textit{et al.\/} \cite[eq.~B.5a]{NQLC} also present
the non-commutative fifth order quintic nonlinear Schr\"odinger equation.
For further early work on non-commutative integrable systems, see in addition, 
Manakov \cite{M1974}, Fordy and Kulisch \cite{FK} and Ablowitz \textit{et al.\/} \cite{APT}.
For more recent work on the multi-component nonlinear Schr\"odinger equation, as well as its discretisation, see 
Ablowitz \textit{et al.\/} \cite{APT}, Degasperis and Lombardo \cite{DL2}
and Pelinovksy~\cite{Pelinovsky}. We remark we do not utilise a Lax pair $L\upsilon=\lambda\upsilon$
and $\pa_t\upsilon=D\upsilon$ for the auxiliary function $\upsilon$ and spectral parameter $\lambda$,
and require compatability. Here we use the linearised evolution equation $\pa_t\upsilon=D\upsilon$
and require $\upsilon$ to have the Hankel property.

General higher order nonlinear Schr\"odinger equations have recently received a lot of attention;
see Karpman \cite{K}, Karpman and Shagalov \cite{KS},
Ben--Artzi \textit{et al.\/} \cite{B-AKS}, Fibich \textit{et al.\/} \cite{FIP},
Pausader~\cite{Pausader}, Boulenger and Lenzmann~\cite{BL},
Kwak~\cite{Kwak}, Oh and Wang~\cite{OW} and Posukhovskyi and Stefanov~\cite{PosukhovskyiStefanov}.
More specifically though, in our main result in Theorem~\ref{thm:main} we
establish integrability for a more general system of equations than that
shown for $[G]$ above. The more general system also includes standard
nonlinear Schr\"odinger dispersion and nonlinear terms, both with
the scalar constant pure imaginary factor $\mu_2$, as well
as the next order terms in the nonlinear Schr\"odinger hierarchy,
namely standard non-commutative modified Korteweg de Vries third order dispersion and nonlinear terms,
both with the scalar constant real factor $\mu_3$.
The commutative form of this more general equation, involving
the parameters $\mu_2$, $\mu_3$ and $\mu_4$ have recently
found applications as models for short pulse propagation in
optical fibres as follows; see Kang \textit{et al.} \cite{KangXiaMa} and Agrawal~\cite{Agrawal}.
The fundamental nonlinear Schr\"odinger equation itself, corresponding to $\mu_3=\mu_4=0$
describes the propagation of picosecond pulses in an optical fibre.
The case when $\mu_4=0$ (only) corresponds to the Hirota equation which describes 
the propagation of femtosecond soliton pulses in the mono-mode optical fibres,
see Demiray \textit{et al.} \cite{Demiray}, Mihalache \textit{et al.} \cite{Mihalache} and 
Nakkeeran~\cite{Nakkeeran}. The case when with $\mu_2$ and $\mu_4$
pure imaginary, and $\mu_3$ real, with all three non-zero describes ``ultrashort
optical-pulse propagation in a long-distance, high-speed optical fibre transmission system'', 
see again Kang \textit{et al.} \cite{KangXiaMa} or Guo \textit{et al.} \cite{GuoHaoGu}.
Also see Wang \textit{et al.} \cite{WangPorsezianHe} who consider the case $\mu_3=0$ (only).
Attosecond pulses in an optical fibre are described by a fifth order nonlinear Schr\"odinger equation. 
Indeed Kang \textit{et al.} \cite{KangXiaMa} consider and eighth order nonlinear Schr\"odinger equation as a model
for ultrashort pulse propagation. The nonlinear Schr\"odinger hierarchy up to and including order eight
can be found in Matveev and Smirnov~\cite{MatveevSmirnov} who consider 
multi-rogue wave solutions. For applications to pulses in erbium-doped fibres
modelled by the higher order nonlinear Schr\"odinger equations above coupled to a Maxwell--Bloch
system, see Guan \textit{et al.} \cite{GuanTianZhenWangChai}, Guo \textit{et al.} \cite{GuoHaoGu},
Ren \textit{et al.} \cite{REnYangLiuXuYang}, Wang \textit{et al.} \cite{WLQ},
Wang \textit{et al.} \cite{WangWuZhang} and Wang \textit{et al.} \cite{WangGaoSuZuo}.

To summarise, what is new in this paper is we: 
\begin{enumerate}
\item[(i)] Give a direct proof, based on the Hankel operator approach of Ch. P\"oppe,
that a generalised non-commutative fourth order quintic nonlinear Schr\"o-dinger equation is linearisable
and thus integrable (Theorem~\ref{thm:main});

\item[(ii)] Prove reverse space-time and reverse time nonlocal versions,
in the sense of Ablowitz and Musslimani~\cite{AM},
of the non-commutative fourth order quintic nonlinear Schr\"odinger equation,
are also linearisable and thus integrable. These results are specialisations of the
approach utilised to establish (i);

\item[(iii)] Develop a numerical method to accurately evaluate the solution to
such integrable systems at any given time, based on the analytical linearisation approach in (i).
The numerical method involves the analytical solution
of the linearised partial differential system at the time given, 
computing an associated data function and then numerically solving
a linear Fredholm equation to generate the solution at that time.
The method is straightforward, and in practice appears to be robust.
\end{enumerate}

Our paper is organised as follows. In Section~\ref{sec:preliminaries}
we introduce the notation and key concepts and identities we need to
prove our main result. The latter is presented and proved in Section~\ref{sec:main}.
We demonstrate our numerical method based on the analytical approach above
in Section~\ref{sec:numericalsimulations}. We include some insights on,
and comments on future directions for, the work herein in the final
Discussion Section~\ref{sec:discussion}.

\section{Preliminaries}\label{sec:preliminaries}
We consider Hilbert--Schmidt integral operators which depend on both a 
spatial parameter $x\in\mathbb{R}$ and a time parameter $t\in[0,\infty)$.
Throughout $\pa_t$ represents the partial derivative with respect to
the time parameter $t$ while $\pa=\pa_x$ represents the partial derivative
with respect to the spatial parameter $x$.
Hilbert--Schmidt operators are representable in terms of square-integrable kernels.
Hence for a given Hilbert--Schmidt operator $F=F(x,t)$,
there exists a square-integrable kernel $f=f(y,z;x,t)$ such that
for any square-integrable function $\phi$,
\begin{equation*}
  (F\phi)(y;x,t) = \int_{-\infty}^0 f(y,z;x,t)\phi(z)\,\mathrm{d}z.
\end{equation*}
\begin{definition}[Kernel bracket]
With reference to the operator $F$ just above, we use the \emph{kernel bracket} notation
$[F]$ to denote the kernel of $F$:
\begin{equation*}
  [F](y,z;x,t) \coloneqq f(y,z;x,t).
\end{equation*}
We often drop the dependencies and simply write $[\,\cdot\,]$.
\end{definition}
Of critical importance throughout this paper is a class of integral operators known as
Hankel operators. We consider Hankel operators which depend on a parameter $x$ as follows. 
\begin{definition}[Hankel operator with parameter]\label{def:Hankel}
We say a given time-dependent Hilbert--Schmidt operator $H$
with corresponding square-integrable kernel $h$ is \emph{Hankel} or \emph{additive}
with parameter $x\in\R$ if its action, for any square-integrable function $\phi$, is given by
\begin{equation*}
  (H\phi)(y;x,t) \coloneqq \int_{-\infty}^0 h(y+z+x;t)\phi(z)\,\mathrm{d}z.
\end{equation*}
\end{definition}
Hankel operators of this form are the starting point for P\"oppe's approach; see
P\"oppe~\cite{P83,P84} and Doikou \textit{et al.\/} \cite{DMSW20,DMS20}.
As mentioned in the introduction there is a crucial kernel product rule we rely on throughout.
This is as follows. We include the proof from Doikou \textit{et al.\/} \cite{DMSW20,DMS20} for completeness.
\begin{lemma}[Kernel product rule]\label{lemma:kernelproductrule}
Assume $H,H'$ are Hilbert--Schmidt \emph{Hankel} operators with parameter $x$ and $F,F'$ are Hilbert--Schmidt operators.
Assume further that the corresponding kernels of $F$ and $F'$ are continuous and
of $H$ and $H'$ are continuously differentiable. Then, the following \emph{kernel product rule} holds,
\begin{equation*}
  [F\pa(HH')F'](y,z;x) = [FH](y,0;x)[H'F'](0,z;x).
\end{equation*}
\end{lemma}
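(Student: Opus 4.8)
The plan is to compute both sides explicitly at the kernel level and recognise that the Hankel (additive) structure of $H$ and $H'$ converts the spatial derivative $\pa=\pa_x$ acting on the composite kernel into a total derivative in the shared integration variable, so that the fundamental theorem of calculus collapses the double integral to a boundary term. First I would write out the kernel of the product $HH'$: since $H$ and $H'$ are Hankel with kernels $h(y+z+x;t)$ and $h'(y+z+x;t)$, the kernel of $HH'$ is
\begin{equation*}
[HH'](y,z;x)=\int_{-\infty}^0 h(y+w+x;t)\,h'(w+z+x;t)\,\rd w.
\end{equation*}
The essential observation is that each Hankel factor depends on $x$ only through the additive combinations $y+w+x$ and $w+z+x$; hence $\pa_x$ applied to the integrand equals $\pa_w$ applied to the same integrand, because differentiating either factor with respect to $x$ or with respect to $w$ produces the identical shift. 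Therefore
\begin{equation*}
\pa\bigl[HH'\bigr](y,z;x)=\int_{-\infty}^0 \pa_w\bigl(h(y+w+x;t)\,h'(w+z+x;t)\bigr)\,\rd w
= h(y+0+x;t)\,h'(0+z+x;t),
\end{equation*}
where the fundamental theorem of calculus evaluates the antiderivative at the endpoints $w=0$ and $w=-\infty$, with the lower limit vanishing by the Hilbert--Schmidt (square-integrability) decay of the kernels. This already proves the result in the special case $F=F'=\id$, and it identifies the boundary values $h(y+x;t)=[H](y,0;x)$ and $h'(z+x;t)=[H'](0,z;x)$ as exactly the factors appearing on the right-hand side.

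Next I would restore the outer operators $F$ and $F'$. Writing $F$ and $F'$ in terms of their kernels $f(y,z;x)$ and $f'(y,z;x)$, the composite $F\pa(HH')F'$ has kernel obtained by integrating the kernel of $\pa(HH')$ against $f$ on the left and $f'$ on the right. Since $F$ and $F'$ do not involve the integration variable $w$ internal to the $HH'$ product, the operator $\pa=\pa_x$ commutes past them only up to the additional $x$-dependence of $f$ and $f'$; the cleanest route is to insert the boundary-term formula for $\pa[HH']$ first, which factorises as a product of a function of $(y_{\mathrm{internal}},0)$ times a function of $(0,z_{\mathrm{internal}})$, and then observe that this factorisation passes through the left and right integrations against $f$ and $f'$ independently. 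Concretely, the left integration against $f$ builds the kernel $[FH](\,\cdot\,,0;x)$ evaluated at its second argument $0$, and the right integration against $f'$ builds $[H'F'](0,\,\cdot\,;x)$ evaluated at its first argument $0$, yielding $[FH](y,0;x)[H'F'](0,z;x)$.

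The main obstacle, and the point requiring care rather than mere bookkeeping, is justifying the interchange of $\pa_x$ with the integrations and the vanishing of the lower endpoint at $w=-\infty$. Here the stated hypotheses are exactly what is needed: the continuous differentiability of $h,h'$ legitimises the pointwise identity $\pa_x(\text{integrand})=\pa_w(\text{integrand})$ and the application of the fundamental theorem of calculus, while the Hilbert--Schmidt assumption supplies the square-integrable decay ensuring the boundary contribution at $-\infty$ is zero and that the differentiation under the integral sign is valid (dominated convergence with continuity of $f,f'$ in $x$). I would note that the time parameter $t$ is a spectator throughout and is suppressed in the statement, and that the key algebraic mechanism---$\pa_x$ acting as $\pa_w$ on additive kernels---is precisely the reason the Hankel property is indispensable; without it, $\pa_x$ would not convert into a total $w$-derivative and no boundary collapse would occur.
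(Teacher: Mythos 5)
Your proposal is correct and follows essentially the same route as the paper: the additive (Hankel) structure converts $\pa_x$ of the composite kernel into a derivative in the shared internal integration variable, the fundamental theorem of calculus collapses that integral to its boundary value at $0$ (with vanishing at $-\infty$), and the resulting factorisation splits the remaining integrations against $f$ and $f'$ into the two factors $[FH](y,0;x)$ and $[H'F'](0,z;x)$. The paper simply carries out this computation in one pass on the full triple integral rather than treating the case $F=F'=\id$ first, but the mechanism is identical.
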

\begin{proof}
We use the fundamental theorem of calculus and Hankel properties of $H$ and $H'$.
Let $f$, $h$, $h'$ and $f'$ denote the integral kernels of $F$, $H$, $H'$ and $F'$ respectively.
By direct computation $[F\pa_x(HH')F'](y,z;x)$ equals
\begin{align*}
&\int_{\R_-^3}
f(y,\xi_1;x)\pa_x\bigl(h(\xi_1+\xi_2+x)h^\prime(\xi_2+\xi_3+x)\bigr)
f^\prime(\xi_3,z;x)\,\rd \xi_3\,\rd \xi_2\,\rd \xi_1\\
&=\int_{\R_-^3}
f(y,\xi_1;x)\pa_{\xi_2}\bigl(h(\xi_1+\xi_2+x)h^\prime(\xi_2+\xi_3+x)\bigr)
f^\prime(\xi_3,z;x)\,\rd \xi_3\,\rd \xi_2\,\rd \xi_1\\
&=\int_{\R_-^2}
f(y,\xi_1;x)h(\xi_1+x)h^\prime(\xi_3+x)f^\prime(\xi_3,z;x)\,\rd \xi_3\,\rd \xi_1\\
&=\int_{\R_-}f(y,\xi_1;x)h(\xi_1+x)\,
\rd \xi_1\cdot\int_{\R_-}h^\prime(\xi_3+x)f^\prime(\xi_3,z;x)\,\rd \xi_3\\
&=\bigl([FH](y,0;x)\bigr)\bigl([H'F'](0,z;x)\bigr),
\end{align*}
giving the result. 
\end{proof}
This kernel bracket operator and product rule above originates from
the work of P\"oppe in \cite{P83, P84, P-KP} and Bauhardt and P\"oppe~\cite{BP-ZS}.
We record in the following lemma some identities for $W\coloneqq (\mathrm{id}+F)^{-1}$
which are useful later on. We assume $F$ depends on a parameter.
Similar results are derived by P\"oppe \cite{P83, P84}.
\begin{lemma}[Inverse operator identities]\label{lemma:invOp}
Suppose the operator $F$ depends on a parameter with respect
to which we wish to compute derivatives. Further suppose $W\coloneqq (\mathrm{id}+F)^{-1}$ exists.
Then the following identities hold:
\begin{enumerate}
\item[(i)] $\id-W=WF=FW$;
\item[(ii)] $\pa W=-W(\pa F)W$;
\item[(iii)] $\pa W=-(\pa W)F-W(\pa F)=-(\pa F)W-F(\pa W)$;
\item[(iv)] $\pa^2W=-2(\pa W)(\pa F)W-W(\pa^2F)W$;
\item[(v)] $\pa^3W=-3(\pa^2 W)(\pa F)W-3(\pa W)(\pa^2F)W-W(\pa^3 F)W$;
\item[(vi)] $\pa^4W=-4(\pa^3 W)(\pa F)W-6(\pa^2 W)(\pa^2 F)W-4(\pa W)(\pa^3F)W-W(\pa^4 F)W$.
\end{enumerate}
\end{lemma}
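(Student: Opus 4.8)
The plan is to reduce everything to differentiating the two defining relations $W(\id+F)=\id$ and $(\id+F)W=\id$, taking care throughout that the operators need not commute. First I would dispatch (i) purely algebraically: expanding $W(\id+F)=\id$ gives $W+WF=\id$, hence $\id-W=WF$, while expanding $(\id+F)W=\id$ gives $\id-W=FW$. For (iii) I would simply apply $\pa$ to these two forms of (i): differentiating $\id-W=WF$ yields $-\pa W=(\pa W)F+W(\pa F)$, which rearranges to the first stated equality, and differentiating $\id-W=FW$ yields the second.

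For the heart of the lemma---the higher derivatives (ii), (iv), (v) and (vi)---I would proceed uniformly by applying the non-commutative Leibniz rule to $W(\id+F)=\id$. Differentiating $n$ times gives, for $n\geq 1$,
\[
\sum_{k=0}^{n}\binom{n}{k}(\pa^{n-k}W)\bigl(\pa^{k}(\id+F)\bigr)=0.
\]
Since $\pa^{k}(\id+F)=\pa^{k}F$ for $k\geq 1$ while the $k=0$ term is $(\pa^{n}W)(\id+F)$, isolating that term and then right-multiplying by $W$ and using $(\id+F)W=\id$ produces the master identity
\[
\pa^{n}W=-\sum_{k=1}^{n}\binom{n}{k}(\pa^{n-k}W)(\pa^{k}F)\,W.
\]
Specialising to $n=1,2,3,4$ reproduces (ii), (iv), (v) and (vi) respectively, with the binomial coefficients $\binom{n}{k}$ matching the stated integer factors; for instance $n=4$ gives the coefficients $4,6,4,1$ appearing in (vi).

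The point requiring discipline, rather than a genuine obstacle, is the non-commutativity: one must always right-multiply by $W$ (never left-multiply) so that the factors land in the stated order, keeping each differentiated copy of $W$ to the left of the differentiated copies of $F$. A direct alternative---differentiating (ii) successively---also works but is messier, since differentiating $\pa W=-W(\pa F)W$ produces a term $W(\pa F)(\pa W)$ that one must recognise, via (ii) again, as equal to $(\pa W)(\pa F)W$, both being equal to $-W(\pa F)W(\pa F)W$. The Leibniz route sidesteps this bookkeeping entirely, which is why I would prefer it.
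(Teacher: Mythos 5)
Your proof is correct and follows essentially the same route as the paper, which simply states that the identities follow by successively differentiating $\id-W=WF$ and then using $W=(\id+F)^{-1}$ to solve for $\pa^nW$; your general Leibniz master identity
$\pa^{n}W=-\sum_{k=1}^{n}\binom{n}{k}(\pa^{n-k}W)(\pa^{k}F)W$
is just a clean packaging of that same computation, and the binomial coefficients do match the stated factors in (ii), (iv)--(vi).
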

\begin{proof}
The first identity is straightforward. The others follow by successively
differentiating $\id-W=WF$ and finally using $W\coloneqq(\id+Q)^{-1}$ in each form.  
\end{proof}
We have the following corollary to Lemma~\ref{lemma:invOp},
which can also be found in Doikou \textit{et al.}~\cite{DMS20}.
\begin{corollary}\label{cor:invId}
Suppose we set $F\coloneqq Q$ with $Q=\tP P$ and $U\coloneqq (\mathrm{id}+Q)^{-1}$
so $U=(\mathrm{id}+\tP P)^{-1}$.  Assume $U$ exists.
Then $U$ satisfies properties (i)--(vi) in Lemma~\ref{lemma:invOp}.
Further suppose we set $F\coloneqq\tilde Q$ with $\tilde Q=P\tP$ and $V\coloneqq(\mathrm{id}+\tilde Q)^{-1}$.
Assume $V$ exists. Then similarly $V$ satisfies properties (i)--(vi) in Lemma~\ref{lemma:invOp}.
We note $PU^{-1}=V^{-1}P$, so we have $VP=PU$. Similarly, we have $U\tP=\tP V$.  
\end{corollary}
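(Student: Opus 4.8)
The plan is to dispatch the first two assertions as direct specialisations of Lemma~\ref{lemma:invOp} and to obtain the intertwining relations by manipulating the defining formulas for $U^{-1}$ and $V^{-1}$.

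First I would observe that Lemma~\ref{lemma:invOp} is formulated for a generic parameter-dependent operator $F$ with $W=(\id+F)^{-1}$ assumed to exist, and so applies with no modification to any admissible choice of $F$. Setting $F\coloneqq Q$ with $Q=\tP P$ and $W\coloneqq U$, the existence hypothesis on $U$ lets me invoke the lemma verbatim, giving properties (i)--(vi) for $U$ with $F$ read as $Q$. Repeating the specialisation with $F\coloneqq\tilde Q$, $\tilde Q=P\tP$ and $W\coloneqq V$ yields the same six identities for $V$. Neither part requires any computation beyond recognising the substitution.

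For the intertwining relations I would pass to the inverses, using $U^{-1}=\id+\tP P$ and $V^{-1}=\id+P\tP$. A direct expansion gives $PU^{-1}=P+P\tP P=(\id+P\tP)P=V^{-1}P$, which is precisely $PU^{-1}=V^{-1}P$; left-multiplying by $V$ and right-multiplying by $U$ then produces $VP=PU$. The companion identity follows symmetrically from $U^{-1}\tP=\tP+\tP P\tP=\tP(\id+P\tP)=\tP V^{-1}$, whence $U\tP=\tP V$ after left-multiplying by $U$ and right-multiplying by $V$.

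I do not expect a genuine obstacle. The substance of the corollary is that the two resolvent-type operators $U$ and $V$, built from the opposite orderings $\tP P$ and $P\tP$, intertwine the factors $P$ and $\tP$; this is a formal consequence of the associativity identities $P(\tP P)=(P\tP)P$ and $(\tP P)\tP=\tP(P\tP)$ carried through to the inverses. The only point meriting care is that every manipulation is an identity of bounded operators, which is ensured by the standing assumption that both inverses $U$ and $V$ exist.
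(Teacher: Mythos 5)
Your proposal is correct and follows exactly the route the paper intends: the paper states this corollary without a separate proof, but its parenthetical hint ``$PU^{-1}=V^{-1}P$, so we have $VP=PU$'' is precisely your computation $P(\id+\tP P)=(\id+P\tP)P$ followed by multiplication by $V$ and $U$, and the first two assertions are indeed just the substitutions $F\coloneqq Q$ and $F\coloneqq\tQ$ into Lemma~\ref{lemma:invOp}.
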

The following key identities also prove useful throughout the proof of
our main result in Section~\ref{sec:main}. To keep our statements succinct
hereafter we use the following \emph{notation convention}. For the
kernel product rule introduced in Lemma~\ref{lemma:kernelproductrule},
for the terms on the right we simply write $[FH][H'F']$, where it is
understood the left factor is evaluated at $(y,0;x,t)$ and the right
factor is evaluated at $(0,z;x,t)$. When there are three
factors in the product, such as for the case $[F_1\pa(H_1H_1')F_1'F_2\pa(H_2H_2')F_2']$
where $H_1$, $H_1'$, $H_2$ and $H_2'$ are Hankel operators, we write
\begin{equation*}
[F_1H_1][H_1'F_1'F_2H_2][H_2'F_2'],
\end{equation*}
where it is understood the left and right factors are evaluated at $(y,0;x,t)$ and
$(0,z;x,t)$ respectively, while the middle factor is evaluated at $(0,0;x,t)$.
This is just a direct consequence of successively applying the kernel product rule.
For higher degree products of the form just above, again the left and right factors
are always evaluated at $(y,0;x,t)$ and $(0,z;x,t)$ respectively,
while all the middle factors are evaluated at $(0,0;x,t)$.
\begin{lemma}[Key identities] \label{lemma:keyidentities} 
Assume the Hilbert--Schmidt operators $P$, $\tP$, $G$, $\tG$
and trace class operators $Q\coloneqq\tP P$, $\tQ\coloneqq P\tP$ all depend on a parameter $x$ and
are related by $P=G(\id+Q)$ and $\tP=\tG(\id+\tQ)$. Assume $P$ and $\tP$
are Hankel operators as in Definition~\ref{def:Hankel} and also the inverse operators
$U\coloneqq(\id+Q)^{-1}$ and $V\coloneqq(\id+\tQ)^{-1}$ exist. 
Then we have the following identities:
\begin{enumerate}
\item[(i)] $\pa[PU\tP]=[G][\tG]$;
\item[(ii)] $\pa\bigl[PU(\pa\tP)\bigr]=[G]\pa[\tG]+[G][\tG][PU\tP]$;
\item[(iii)] $\pa\bigl[PU(\pa^2\tP)\bigr]=[G]\pa^2[\tG]+2\bigl([G][\tG]\bigr)^2
              +[G]\bigl(\pa[\tG]\bigr)[PU\tP]-[G][\tG]\bigl[\pa(PU)\tP\bigr]$;
\end{enumerate}
and by partial differentiation that,
\begin{enumerate}
\item[(iv)] $\pa^2[PU\tP]=\pa\bigl([G][\tG]\bigr)$;
\item[(v)] $\pa^2\bigl[PU(\pa\tP)\bigr]=\pa\bigl([G]\pa[\tG]\bigr)
            +\bigl([G][\tG]\bigr)^2+\pa\bigl([G][\tG]\bigr)[PU\tP]$ 
\end{enumerate}
\end{lemma}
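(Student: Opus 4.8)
The plan is to handle all five identities with a single engine whose only nontrivial ingredient is the kernel product rule (Lemma~\ref{lemma:kernelproductrule}), together with two elementary observations. First, every $x$-derivative of a Hankel operator is again Hankel, so $\pa^kP$ and $\pa^k\tP$ may all serve as the Hankel factors $H,H'$. Second, I record a pair of bookkeeping identities. From the Fredholm relations and Corollary~\ref{cor:invId} I have $G=PU=VP$ and $\tG=U\tP=\tP V$, and the collapse $PU\tP=VP\tP=V\tQ=\id-V$. Then: (E1) for any Hilbert--Schmidt $A$ and Hankel $H'$, $[A(\pa H')]=\pa_z[AH']$, because the derivative of a Hankel kernel in its argument coincides with its derivative in the integration endpoint variable $z$, which then pulls outside the integral; and (E2) the conversion $\pa_z[\tG]=\pa[\tG]+[\tG][PU\tP]$. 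The latter is the crux: from $\tG=U\tP$ one has $\pa[\tG]=[(\pa U)\tP]+[U(\pa\tP)]$, and (E1) gives $[U(\pa\tP)]=\pa_z[\tG]$, so $\pa_z[\tG]=\pa[\tG]-[(\pa U)\tP]$; moreover $(\pa U)\tP=-U(\pa Q)U\tP=-U\pa(\tP P)U\tP$ is exactly in the form $[F\pa(HH')F']$, whence the product rule yields $[(\pa U)\tP]=-[U\tP][P\,U\tP]=-[\tG][PU\tP]$.

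Identity (i) serves as the template. I would differentiate $PU\tP=\id-V$ and use $\pa V=-V(\pa\tQ)V=-V\pa(P\tP)V$ to get $\pa[PU\tP]=[V\pa(P\tP)V]$, which is precisely $[F\pa(HH')F']$ with $F=F'=V$, $H=P$, $H'=\tP$; the kernel product rule then gives $[VP][\tP V]=[G][\tG]$.

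For (ii) I would chain the engine. By (E1), $[PU(\pa\tP)]=\pa_z[PU\tP]$, so $\pa[PU(\pa\tP)]=\pa_z\pa[PU\tP]=\pa_z\bigl([G][\tG]\bigr)$ by (i), and since $[G]=[G](y,0)$ is independent of $z$ this equals $[G]\,\pa_z[\tG]$; inserting (E2) produces $[G]\bigl(\pa[\tG]+[\tG][PU\tP]\bigr)=[G]\pa[\tG]+[G][\tG][PU\tP]$, which is (ii). Identity (iii) is the same mechanism with two internal derivatives: $[PU(\pa^2\tP)]=\pa_z^2[PU\tP]$ gives $\pa[PU(\pa^2\tP)]=[G]\,\pa_z^2[\tG]$, and $\pa_z^2[\tG]$ is evaluated by applying (E2) twice together with the companion conversion $\pa_z[PU\tP]=\pa[PU\tP]-[\pa(PU)\tP]$ for the composite $PU\tP=G\tP$ (derived exactly as (E2) was). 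This manufactures the four terms $[G]\pa^2[\tG]$, $2([G][\tG])^2$, $[G](\pa[\tG])[PU\tP]$ and $-[G][\tG][\pa(PU)\tP]$, the factor $2$ appearing because both $\pa[PU\tP]=[G][\tG]$ and the companion conversion each contribute a copy of $[\tG][G][\tG]$.

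Finally, (iv) and (v) follow by one further $\pa$-differentiation of (i) and (ii) and re-substitution of (i). Thus (iv) is immediate, $\pa^2[PU\tP]=\pa\bigl([G][\tG]\bigr)$; and for (v), differentiating (ii) gives $\pa\bigl([G]\pa[\tG]\bigr)+\pa\bigl([G][\tG][PU\tP]\bigr)$, where the Leibniz rule on the three-factor product together with $\pa[PU\tP]=[G][\tG]$ turns the last derivative into $\pa\bigl([G][\tG]\bigr)[PU\tP]+([G][\tG])^2$, matching the stated right-hand side. The main obstacle throughout is not the algebra but the bookkeeping: one must consistently translate the internal endpoint derivatives $\pa_z$ (and, for the left factors, $\pa_y$) acting on the non-Hankel composites $G$ and $\tG$ into the parameter derivative $\pa=\pa_x$, since it is exactly these conversions---each a single application of the product rule to a term of the shape $(\pa W)(\text{Hankel})$---that generate the lower-order correction terms, all while keeping the evaluation-point convention $(y,0)/(0,0)/(0,z)$ straight across the multi-factor products.
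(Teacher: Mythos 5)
Your proposal is correct, and it takes a genuinely different route from the paper. The paper proves (ii) and (iii) by working entirely at the operator level: it expands $\pa\bigl(PU(\pa^k\tP)\bigr)$ by Leibniz, repeatedly inserts $\id=V+PU\tP$, adds and subtracts auxiliary terms such as $[\tP V(\pa^2P)\tP V]$, and applies the kernel product rule many times before everything collapses; the proof of (iii) alone runs through six labelled sub-steps. Your engine instead rests on the observation (your (E1)) that for Hankel $H'$ one has $[A(\pa H')]=\pa_z[AH']$, so that each internal derivative on $\tP$ is an endpoint derivative of the composite kernel; combined with the single conversion identity $\pa_z[\tG]=\pa[\tG]+[\tG][PU\tP]$ (and its companion for $[PU\tP]=[G\tP]$), each of which is exactly one application of the product rule to a term of the form $(\pa W)(\text{Hankel})$, identities (ii) and (iii) become the statements $\pa[PU(\pa^k\tP)]=[G]\,\pa_z^k[\tG]$ for $k=1,2$, evaluated by iterating the conversion. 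I checked the bookkeeping for (iii): the two copies of $[\tG][G][\tG]$ arise exactly as you say (one from $\pa_x$ hitting $[PU\tP]$ inside $\pa_z\pa_x[\tG]$ via (i), one from the companion conversion), and the four terms and the factor $2$ all come out right, with the evaluation-point convention $(y,0)/(0,0)/(0,z)$ preserved. What your approach buys is considerable: it is shorter, it makes the recursive structure in the number of internal derivatives transparent, and it would extend mechanically to $\pa[PU(\pa^3\tP)]$, which the Discussion section identifies as the key identity needed for the fifth order case --- indeed the paper itself remarks that ``it is likely the proof of the Lemma can be simplified further,'' and yours is such a simplification. The only points you should make explicit are the justification for differentiating under the integral sign in (E1) and for commuting $\pa_x$ with $\pa_z$ on the composite kernels; both hold under the $\mathrm{Dom}(\pa^4)$ and smoothness assumptions of Remark~\ref{remark:well-posedness}, but they are doing real work in your argument and deserve a sentence.
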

\begin{proof}
First, by differentiating the formula $\id-V=PU\tP$ partially with respect to $x$ we see,
$\bigl[\pa(PU\tP)\bigr]=-[\pa V]=\bigl[V\pa(P\tP)V\bigr]=[VP][\tP V]=[G][\tG]$, giving (i).

Second, for (ii), using the product rule and $\id-V=PU\tP$ we find:
\begin{align*}
\pa(PU(\pa\tP))=&\;\pa(PU)(\pa\tP)+PU(\pa^2\tP)\\
=&\;\pa(PU)(\pa\tP)V+PU(\pa^2\tP)V+\pa(PU)(\pa\tP)PU\tP+PU(\pa^2\tP)PU\tP\\
=&\;\pa(VP)(\pa\tP)V+VP(\pa^2\tP)V+\pa(PU)\pa(\tP P)U\tP+PU\pa\bigl((\pa\tP) P\bigr)U\tP\\
&\;-\pa(PU)\tP(\pa P)U\tP-PU(\pa\tP)(\pa P)U\tP.
\end{align*}
The last two terms on the right combine to become $-(\pa V)(\pa P)\tP V$ since we know
$\pa(PU\tP)=-\pa V$ and $U\tP=V\tP$.
Substituting this result into the expression above and combining $-(\pa V)(\pa P)U\tP=-(\pa V)(\pa P)\tP V$
with the first two terms on the right and then applying the kernel bracket operator we observe:
\begin{align*}
\bigl[\pa(PU(\pa\tP))\bigr]
=&\;[V\pa(P(\pa\tP))V]+[(\pa V)\pa(P\tP)V]\\
  &\;+[\pa(PU)\tP][PU\tP]+[PU(\pa\tP)][PU\tP]\\
=&\;[VP][(\pa\tP)V]+[(\pa V)P][\tP V]
  +[\pa(PU\tP)][PU\tP].
\end{align*}
Note we have
\begin{equation*}
[(\pa\tP)V]=[\pa(\tP V)]-[\tP(\pa V)]=\pa[\tP V]+[\tP V\pa(P\tP)V]=\pa[\tP V]+[\tP VP][\tP V],
\end{equation*}
and $[(\pa V)P]=-[V\pa(P\tP)VP]=-[VP][\tP VP]$.
Substituting these into the right-hand side, cancelling like terms and using (i)
gives (ii).

Third, we prove (iii) using a similar strategy to that we used for (ii),
Using the product rule and $\id-V=PU\tP$ we find:
\begin{align*}
\pa\bigl(PU(\pa^2\tP)\bigr)=&\;\pa(PU)(\pa^2\tP)+PU(\pa^3\tP)\\
=&\;\pa(PU)(\pa^2\tP)V+PU(\pa^3\tP)V+\pa(PU)(\pa^2\tP)PU\tP+PU(\pa^3\tP)PU\tP\\
=&\;\pa(VP)(\pa^2\tP)V+VP(\pa^3\tP)V+\pa(PU)\pa\bigl((\pa \tP)P\bigr)U\tP\\ 
&\;+PU\pa\bigl((\pa^2\tP) P\bigr)U\tP-\pa(PU)(\pa\tP)(\pa P)U\tP-PU(\pa^2\tP)(\pa P)U\tP\\
=&\;(\pa V)P(\pa^2\tP)V+V\pa\bigl(P(\pa^2\tP)\bigr)V+\pa(PU)\pa\bigl((\pa \tP)P\bigr)U\tP\\
&\;+PU\pa\bigl((\pa^2\tP) P\bigr)U\tP-\pa\bigl(PU(\pa\tP)\bigr)(\pa P)U\tP.
\end{align*}
If we now apply the kernel bracket operator to the expression above, and combine
the third and fourth terms on the right, use that $\pa V=-V\pa(P\tP)V$ and use the
kernel product rule, we obtain:
\begin{align*}
\pa\bigl[PU(\pa^2\tP)\bigr]
=&\;-[G]\bigl[\tP VP(\pa^2\tP)V\bigr]+[G]\bigl[(\pa^2\tP)V\bigr]+\bigl(\pa\bigl[PU(\pa \tP)\bigr]\bigr)[PU\tP]\\
&\;-\bigl[\pa\bigl(PU(\pa\tP)\bigr)(\pa P)U\tP\bigr].
\end{align*}
Note for the second factor of the second term on the right just above we have:
\begin{align*}
\bigl[(\pa^2\tP)V\bigr]
=&\;\pa^2[G]-2\bigl[(\pa\tP)(\pa V)\bigr]-\bigl[\tP(\pa^2 V)\bigr]\\
\mathrm{(a)}~=&\;\pa^2[G]+2\bigl[(\pa\tP)V\pa(P\tP)V\bigr]+2\bigl[\tP(\pa V)\pa(P\tP)V\bigr]+\bigl[\tP V\pa^2(P\tP)V\bigr]\\
\mathrm{(b)}~=&\;\pa^2[G]+2\bigl[(\pa\tP)VP\bigr][\tG]
+2\bigl[\tP(\pa V)P\bigr][\tG]+2\bigl[\tP V\pa\bigl((\pa P)\tP\bigr)V\bigr]\\
&\;-\bigl[\tP V(\pa^2 P)\tP V\bigr]+\bigl[\tP VP(\pa^2\tP)V\bigr]\\
\mathrm{(c)}~=&\;\pa^2[G]+2\pa\bigl[\tP VP\bigr][\tG]-\bigl[\tP V(\pa^2 P)\tP V\bigr]+\bigl[\tP VP(\pa^2\tP)V\bigr]\\
\mathrm{(d)}~=&\;\pa^2[G]+2[\tG][G][\tG]-\bigl[\tP V(\pa^2 P)\tP V\bigr]+\bigl[\tP VP(\pa^2\tP)V\bigr],
\end{align*}
where we used the following, we used the: (a) Identities (i) and (iv) from the inverse operator
Lemma~\ref{lemma:invOp} with $W=V$; (b) Kernel bracket product rule, added and subtracted
the term $[\tP V(\pa^2 P)\tP V]$ and separated the final term from the previous line
as shown; (c) Kernel bracket product rule applied to the fourth term on the right from the
previous line, and combined the resulting term with the other terms with the factor $2$ shown;
and (d) Identity $[\pa(\tP VP)]=-[\pa U]=[U\pa(\tP P)U]=[G][\tG]$.

We now insert this expression for $[(\pa^2\tP)V]$ into the second term
on the right in the expression for $\pa[PU(\pa^2\tP)]$ just above,
cancelling like terms, this gives
\begin{align*}
\pa\bigl[PU(\pa^2\tP)\bigr]
=&\;[G]\pa^2[G]+2\bigl([G][\tG]\bigr)^2-[G]\bigl[\tP V(\pa^2 P)\tP V\bigr]\\
&\;+\bigl(\pa\bigl[PU(\pa \tP)\bigr]\bigr)[PU\tP]-\bigl[\pa\bigl(PU(\pa\tP)\bigr)(\pa P)U\tP\bigr]\\
\mathrm{(e)}~=&\;[G]\pa^2[G]+2\bigl([G][\tG]\bigr)^2+[G]\bigl(\pa[G]\bigr)[PU\tP]+[G][\tG][PU\tP]^2\\
&\;-[G]\bigl[\tP V(\pa^2 P)\tP V\bigr]
-\bigl[\pa\bigl(PU(\pa\tP)\bigr)(\pa P)U\tP\bigr]\\
\mathrm{(f)}~=&\;[G]\pa^2[G]+2\bigl([G][\tG]\bigr)^2+[G]\bigl(\pa[G]\bigr)[PU\tP]-[G][\tG]\bigl[P(\pa U)\tP\bigr]\\
&\;-[G]\bigl[\tP V(\pa^2 P)\tP V\bigr]-\bigl[\pa\bigl(PU(\pa\tP)\bigr)(\pa P)U\tP\bigr],
\end{align*}
where in (e) we used result (ii) and in (f) we used $[PU\tP]^2=[PU\pa(\tP P)U\tP]=-[P(\pa U)\tP]$.
Consider the two terms on the final line on the right-hand side just above. Reversing the kernel product rule
on the first term and using $V\pa(P\tP)V=-\pa V=\pa(PU\tP)$ these two terms become
\begin{align*}
-\bigl[\pa(PU\tP)(\pa^2 P)\tP V\bigr]-\bigl[\pa&\bigl(PU(\pa\tP)\bigr)(\pa P)U\tP\bigr]\\
=&\;-\bigl[\pa(PU)\tP(\pa^2 P)U\tP\bigr]-\bigl[PU(\pa\tP)(\pa^2 P)U\tP\bigr]\\
&\;-\bigl[\pa(PU)(\pa\tP)(\pa P)U\tP\bigr]-\bigl[PU(\pa^2\tP)(\pa P)U\tP\bigr]\\
=&\;-\bigl[\pa(PU)\pa\bigl(\tP(\pa P)\bigr)U\tP\bigr]-\bigl[PU\pa\bigl((\pa\tP)(\pa P)\bigr)U\tP\bigr]\\
=&\;-\bigl[\pa(PU)\tP\bigr]\bigl[(\pa P)U\tP\bigr]-\bigl[PU(\pa\tP)\bigr]\bigl[(\pa P)U\tP\bigr]\\
=&\;-[G][\tG]\bigl[(\pa P)U\tP\bigr].
\end{align*}
Substituting this back into the last expression for $\pa[PU(\pa^2\tP)]$ just above and
combining the terms $[(\pa P)U\tP]+[P(\pa U)\tP]=[\pa(PU)\tP]$ gives result (iii).

Fourth, results (iv) and (v) follow directly by respectively differentiating (i) and (ii)
partially with respect to $x$, and using result (i) itself to help establish (v).
\end{proof}
We now outline the coupled linear operator system that underlies
the non-commutative fourth order quintic nonlinear Schr\"odinger equation we consider herein.
Solutions of this coupled linear system generate solutions to the target nonlinear
local and nonlocal partial differential equations. 
\begin{definition}[Linear operator system]\label{def:linearoperatorsystem}
Suppose the Hilbert--Schmidt linear operators $P$, $\tP$, $Q$, $\tQ$, $G$ and $\tG$
satisfy the coupled linear system of equations:
\begin{equation*}
\begin{aligned}
\pa_tP&=\mu_2\pa^2 P+\mu_3\pa^3 P+\mu_4\pa^4 P,\\
Q&=\tP P,\\
P&=G(\id+Q),
\end{aligned}
~\quad\text{and}~\quad
\begin{aligned}
\pa_t{\tP}&=\tilde{\mu}_2\pa^2 \tP+\tilde{\mu}_3\pa^3\tP+\tilde{\mu}_4\pa^4\tP,\\
\tQ&=P\tP,\\
\tP&=\tG(\id+\tQ).
\end{aligned}
\end{equation*}
where the constant parameters $\mu_2,\mu_3,\mu_4,\tilde{\mu}_2,\tilde{\mu}_3,\tilde{\mu}_4\in\mathbb C$.
Naturally we suppose the matrix kernels of $P$ and $\tP$ are commensurate so $Q$ and $\tQ$ are well-defined.
\end{definition}
We say the system of equations prescribing $G$ and $\tG$ in this definition is linear.
This is because, to determine $G$ and $\tG$, we must first solve the linear partial differential
equations prescribing the flows of $P$ and $\tP$. Then second we determine $Q$ and $\tQ$ directly
from $P$ and $\tP$ without having to solve another equation. And finally third, we determine
$G$ and $\tG$ by solving the linear Fredholm equations shown.

The parameters $\mu_j$ for $j=2,3,4$ are in general arbitrary complex numbers.
However we set $\tilde{\mu}_j=(-1)^{j-1}\mu_j$, for $j=2,3,4$. 
Suppose for some finite time interval we know both $P$ and $\tP$ are Hilbert--Schmidt operators whose
kernels also lie in $\mathrm{Dom}(\pa^4)$, and they are smooth in time. Here $\mathrm{Dom}(\pa^4)$ denotes
the subset of kernels which are four times continuously differentiable with respect to $x$,
with all square-integrable. 
Note by the ideal property for Hilbert--Schmidt operators, the operators $Q$ and $\tQ$ are trace-class
and smooth in time on the finite time interval.
Assume the Fredholm determinants $\mathrm{det}(\id+Q(0))$ and $\mathrm{det}(\id+\tQ(0))$ are non-zero;
see the end of this section for more details on Fredholm determinants.
Then there exists a possibly shorter finite time interval on which the Fredholm determinant associated
with $Q=Q(t)$ and $\tQ=\tQ(t)$ are non-zero. Further, on that time interval, there exist unique
solutions $G$ and $\tG$ to the linear Fredholm equations $P=G(\id+Q)$ and $\tP=\tG(\id+\tQ)$,
respectively, whose kernels lie in $\mathrm{Dom}(\pa^4)$ and are smooth in time.
These conclusions are established in Doikou \textit{et al.}~\cite{DMSW20}. 

Now suppose the complex matrix-valued functions $p=p(x,t)$ and $\tilde{p}=\tilde{p}(x,t)$
satisfy the respective linear equations,
\begin{equation*}
\pa_tp=d(\pa)p\quad\text{and}\quad\pa_t\tilde{p}=\tilde{d}(\pa)\tilde{p}, 
\end{equation*}
where
\begin{equation*}
d(\pa)\coloneqq\mu_2\pa^2+\mu_3\pa^3+\mu_4\pa^4
\quad\text{and}\quad
\tilde{d}(\pa)\coloneqq\tilde{\mu}_2\pa^2+\tilde{\mu}_3\pa^3+\tilde{\mu}_4\pa^4,
\end{equation*}
with $p(x,0)=p_0(x)$ and $\tilde{p}(x,0)=\tilde{p}_0(x)$ for all $x\in\R$ for given
complex matrix valued functions $p_0$ and $\tilde{p}_0$.
For $w:\R\to\mathbb{R}_+$, let $L^2_w$ denote the space of complex matrix-valued functions $f$
on $\R$ whose $L^2$-norm weighted by $w$ is finite, i.e.\/
\begin{equation*}
  \|f\|_{L^2_w}\coloneqq\int_{\R} \mathrm{tr}\,\bigl(f^\dag(x)f(x)\bigr)w(x)\,\mathrm{d}x<\infty,
\end{equation*} 
where $f^\dag$ denotes the complex-conjugate transpose of $f$ and `$\mathrm{tr}$' is the trace operator. 
Let $W:\mathbb{R}\to\mathbb{R}_+$ denote the function $W:x\mapsto 1+x^2$.
Further let $H$ denote the Sobolev space of complex matrix-valued functions who themselves,
as well as derivatives $\pa$ to all orders of them, are square-integrable.
\begin{definition}[Dispersion property]\label{def:dispersionproperty}
We say the constant coefficient polynomial operator $d=d(\pa)$ 
satisfies the \emph{dispersion property} if for all $\kappa\in\R$:
\begin{equation*}
\bigl(d(\mathrm{i}\kappa)\bigr)^\ast=-d(\mathrm{i}\kappa).
\end{equation*}
\end{definition}
Suppose $d(\pa)=\mu_2\pa^2+\mu_3\pa^3+\mu_4\pa^4$ satisfies the dispersion property.
This places a restriction on the parameters $\mu_2,\mu_3,\mu_4\in\CC$, in particular that
$\mu_2$ and $\mu_4$ are pure imaginary parameters and $\mu_3$ is a real parameter.
Then Doikou \textit{et al.\/} \cite[Lemma~3.1]{DMS20} establish if 
$p_0\in H\cap L^2_W$ then $p\in C^\infty\bigl([0,\infty);H\cap L^2_W\bigr)$ 
and the corresponding Hankel operator $P=P(t)$ is Hilbert--Schmidt valued.
Analogous results carry over to the operator $\tP$ under the assumption
$\tilde{d}=\tilde{d}(\pa)$ satisfies the dispersion property, 
with the corresponding restrictions imposed on 
$\tilde{\mu}_2,\tilde{\mu}_3,\tilde{\mu}_4\in\CC$.

\begin{remark}[Well-posedness of the linear system]\label{remark:well-posedness}
The well-posedness of solutions to the linear operator system in Definition~\ref{def:linearoperatorsystem},
under the dispersion property assumption for $d=d(\pa)$ and $\tilde{d}=\tilde{d}(\pa)$,
can be established by combining the conclusions of the previous two paragraphs as follows.
More details can be found in Doikou \textit{at al.}~\cite{DMSW20} and Doikou \textit{at al.}~\cite{DMS20}.
Assume $d=d(\pa)$ and $\tilde{d}=\tilde{d}(\pa)$ satisfy the dispersion property
and $p_0\in H\cap L^2_W$ and $\tilde{p}_0\in H\cap L^2_W$.
Hence Hankel operators $P_0$ and $\tP_0$ of the form given in Definition~\ref{def:Hankel},
respectively generated from $p_0$ and $\tilde{p}_0$, are Hilbert--Schmidt operators.
Naturally we assume the matricies $p_0$ and $\tilde{p}_0$
are commensurate so the matrix products $\tilde{p}_0p_0$ and $p_0\tilde{p}_0$ make sense.
We further assume the trace-class operators $Q_0\coloneqq\tP_0 P_0$ and $\tQ_0\coloneqq P_0\tP_0$ 
are such that $\mathrm{det}(\id+Q_0)\neq0$ and $\mathrm{det}(\id+\tQ_0)\neq0$. 
Then we deduce the commensurate solutions $p=p(y+x,t)$ and $\tilde{p}=\tilde{p}(y+x,t)$
to the respective linear partial differential equations $\pa_tp=d(\pa)p$
and $\pa_t\tilde{p}=\tilde{d}(\pa)\tilde{p}$ are such that 
$p\in C^\infty\bigl([0,\infty);H\cap L^2_W\bigr)$ and
$\tilde{p}\in C^\infty\bigl([0,\infty);H\cap L^2_W\bigr)$ with
$p(x,0)=p_0(x)$ and $\tilde{p}(x,0)=\tilde{p}_0(x)$ for all $x\in\R$.
The corresponding respective Hankel operators $P=P(x,t)$ and $\tP=\tP(x,t)$
are Hilbert--Schmidt operators and smooth functions of $x\in\R$ and $t\in[0,\infty)$.
The kernel function $q=q(y,z;x,t)$ corresponding to $Q=Q(x,t)$ given by
\begin{equation*}
q(y,z;x,t)=\int_{\R_-} \tilde{p}(y+\xi+x,t)p(\xi+z+x,t)\,\mathrm{d}\xi,
\end{equation*}
generates a trace-class operator and is a smooth function of $x$ and $t$, where $\R_-\coloneqq(-\infty,0]$.
The kernel function $\tilde q=\tilde q(y,z;x,t)$ corresponding to $\tQ=\tQ(x,t)$ is
defined similarly, with the positions of $p$ and $\tilde p$ exchanged, and possesses
the same properties. Further there exists a $T>0$ such that for $t\in[0,T]$ we know:
$\det(\id+Q(x,t))\neq0$ and $\det(\id+\tQ(x,t))\neq0$ and there exists a unique smooth function $g$ 
satisfying the linear Fredholm equation given by,
\begin{equation*}
p(y+z+x;t)=g(y,z;x,t)+\int_{\R_-} g(y,\xi;x,t)q(\xi,z;x,t)\,\mathrm{d}\xi,
\end{equation*}
as well as a unique smooth $\tilde{g}$ satisfying an analogous linear Fredholm
equation but with $p$, $q$ and $g$ respectively replaced by $\tilde p$, $\tilde q$ and $\tilde g$.
\end{remark}

Finally we briefly outline why the solution flow for $G$ prescribed in
Definition~\ref{def:linearoperatorsystem}, or in fact for the inflated system
for $G$ and $\tilde G$ shown therein, is a Fredholm Grassmannian flow.
Details on Fredholm Grassmann manifolds can be found in Pressley and Segal~\cite{PS}
or more recently in Abbondandolo and Majer~\cite{AMajer} or Andruchow and Larotonda~\cite{AL}.   
Their connection to integrable systems can be found in Sato~\cite{SatoI,SatoII} and
Segal and Wilson~\cite{SW}. For more explicit details on the connection between
Fredholm Grassmannian flows and the flow prescribed by the linear system of
equations in Definition~\ref{def:linearoperatorsystem}, see 
Beck \textit{et al.\/ }~\cite{BDMSI,BDMSII},
Doikou \textit{et al.\/ }~\cite[Sec.~2.3]{DMS20}
and Doikou \textit{et al.\/ }~\cite{DMSW20}. 
Suppose $\Hb$ is a given separable Hilbert space.
The Fredholm Grassmannian of all subspaces of $\Hb$ that are comparable
in size to a given closed subspace $\Vb\subset\Hb$ is defined as follows;  
see for example Segal and Wilson~\cite{SW}. 

\begin{definition}[Fredholm Grassmannian]\label{def:FredholmGrassmannian}
Let $\Hb$ be a separable Hilbert space with a given decomposition
$\Hb=\Vb\oplus\Vb^\perp$, where $\Vb$ and $\Vb^\perp$ are infinite
dimensional closed subspaces. The Grassmannian $\Gr(\Hb,\Vb)$
is the set of all subspaces $\Wb$ of $\Hb$ such that:
\begin{enumerate}
\item[(i)] The orthogonal projection $\mathrm{pr}\colon\Wb\to\Vb$ is 
a Fredholm operator, indeed it is a Hilbert--Schmidt perturbation
of the identity; and
\item[(ii)] The orthogonal projection $\mathrm{pr}\colon\Wb\to\Vb^\perp$ 
is a Hilbert--Schmidt operator.
\end{enumerate}
\end{definition}

Since $\Hb$ is separable, any element in $\Hb$ has a representation
on a countable basis, for example via the sequence of coefficients of the basis elements.
Suppose we are given a set of independent sequences in $\Hb=\Vb\oplus\Vb^\perp$
which span $\Vb$ and we record them as columns in the infinite matrix
\begin{equation*}
W=\begin{pmatrix} \id+Q \\ P \end{pmatrix}.
\end{equation*}
In other words, each column of $\id+Q\in\Vb$ and each column of $P\in\Vb^\perp$.
Assume also that when we constructed $\id+Q$ we ensured it was a Fredholm operator
on $\Vb$ with $Q\in\mathfrak J_2(\Vb;\Vb)$, where $\mathfrak J_2(\Vb;\Vb)$ is the
class of Hilbert--Schmidt operators from $\Vb$ to $\Vb$, equipped with
the norm $\|Q\|_{\mathfrak J_2}^2\coloneqq\mathrm{tr}\,Q^\dag Q$ where `$\mathrm{tr}$'
is the trace operator. The space $\mathfrak J_1(\Vb;\Vb)$ denotes the space
of trace-class operators, and for any $Q\in\mathfrak J_n$, $n=1,2$, we can define
\begin{equation*}
\mathrm{det}_n(\id+Q)\coloneqq\exp\Biggl(\sum_{k\geqslant n}\frac{(-1)^{k-1}}{k}\mathrm{tr}\,(Q^k)\Biggr).
\end{equation*}
This is the Fredholm determinant when $n=1$, and the regularised Fredholm determinant
when $n=2$, see Simon~\cite{Simon:Traces}. Naturally the operator $\id+Q$ is invertible
if and only if $\mathrm{det}_n(\id+Q)\neq0$. We also assume we constructed $P$
to ensure $P\in\mathfrak J_2(\Vb;\Vb^\perp)$, the space of Hilbert--Schmidt
operators from $\Vb$ to $\Vb^\perp$. Let $\Wb$ denote the subspace of $\Hb$
represented by the span of the columns of $W$. Let $\Vb_0\cong\Vb$ denote the canonical
subspace of $\Hb$ with the representation
\begin{equation*}
V_0=\begin{pmatrix} \id\\ O \end{pmatrix},
\end{equation*}
where $O$ is the infinite matrix of zeros. The projections 
$\mathrm{pr}\colon\Wb\to\Vb_0$ and $\mathrm{pr}\colon\Wb\to\Vb_0^\perp$
respectively generate
\begin{equation*}
W^\parallel=\begin{pmatrix} \id+Q \\ O \end{pmatrix}
\quad\text{and}\quad
W^\perp=\begin{pmatrix} O \\ P \end{pmatrix}.
\end{equation*}
This projection is achievable if and only if $\mathrm{det}_2(\id+Q)\neq0$.
Assume this is the case for the moment. We observe that the subspace
of $\Hb$ represented by the span of the columns of $W^\parallel$
coincides with the subspace $\Vb_0$, indeed, the transformation
$(\id+Q)^{-1}\in\mathrm{GL}(\Vb)$ transforms $W^\parallel$ to $V_0$.
Under the same transformation the representation $W$ for $\Wb$ becomes
\begin{equation*}
\begin{pmatrix} \id \\ G \end{pmatrix},
\end{equation*}
where $G=P(\id+Q)^{-1}$. We observe that any subspace $\Wb$ that can
be projected onto $\Vb_0$ can be represented in this way and vice-versa.
In this representation the operators $G\in\mathfrak J_2(\Vb,\Vb^\perp)$
parameterise all the subspaces $\Wb$ that can be projected on to $\Vb_0$.
If $\mathrm{det}_2(\id+Q)=0$ so the projection above is not possible,
we need to choose a different representative coordinate chart/patch.
This effectively corresponds to choosing a subspace $\Vb_0^\prime\cong\Vb$ of $\Hb$
different to canonical subspace $\Vb_0$ indicated above such that
the projection $\Wb\to\Vb_0^\prime$ is an isomorphism. This is
always possible; see Pressley and Segal~\cite[Prop.~7.1.6]{PS}.
For further details on coordinate patches see Beck \textit{et al.}~\cite{BDMSI}
and Doikou \textit{et al.}~\cite{DMSW20}, and for the analogous argument
for the inflated system shown in Definition~\ref{def:linearoperatorsystem}
see Doikou \textit{et al.}~\cite{DMS20}.
We also discuss the implications of $\id+Q$ becoming singular and
the necessity for different coordinate patches briefly in
the Discussion Section~\ref{sec:discussion}.

\section{Non-commutative fourth order quintic nonlinear Schr\"odinger}\label{sec:main}
We now assume $\tilde{\mu}_2=-\mu_2$, $\tilde{\mu}_3=\mu_3$ and $\tilde{\mu}_4=-\mu_4$.
For our main result below we also assume $d=d(\pa)$ and $\tilde{d}=\tilde{d}(\pa)$
satisfy the dispersion property in Definition~\ref{def:dispersionproperty}.
Hence we assume the parameters $\mu_2,\mu_3,\mu_4\in\CC$ are such that
$\mu_2,\mu_4\in\mathrm{i}\R$, the set of pure imaginary numbers, and $\mu_3\in\R$.
We use the notation $P^\dag$ to denote the operator adjoint
to the Hilbert--Schmidt operator $P$. In other words, if the Hilbert--Schmidt operator $P$
has the kernel $p$, then $P^\dag$ is the Hilbert--Schmidt operator whose kernel is the
complex-conjugate transpose of $p$ which we also denote by $p^\dag$. 
\begin{theorem}[Quintic kernel nonlinear Schr\"odinger equation]\label{thm:main}
Assume $P$, $\tP$, $Q$, $\tQ$, $G$ and $\tG$
satisfy the linear operator system in Definition~\ref{def:linearoperatorsystem} and
all the assumptions outlined in Remark~\ref{remark:well-posedness}.
Then for some $T>0$, the integral kernel $[G]=[G](y,z;x,t)$ satisfies,
for every $t\in[0,T]$, the matrix kernel equation:
\begin{align*}
(\pa_t-\mu_2\pa^2-&\mu_3\pa^3-\mu_4\pa^4)[G]\\
=&\;2\mu_2[G][\tG][G]+3\mu_3\Bigl(\bigl(\pa[G]\bigr)[\tG][G]+[G][\tG]\bigl(\pa[G]\bigr)\Bigr)\\
&\;+2\mu_4\Bigl(2\bigl(\pa^2[G]\bigr)[\tG][G]+[G]\bigl(\pa^2[\tG]\bigr)[G]+2[G][\tG]\bigl(\pa^2[G]\bigr)\\
&\;+\bigl(\pa[G]\bigr)\bigl(\pa[\tG]\bigr)[G]+3\bigl(\pa[G]\bigr)[\tG]\bigl(\pa[G]\bigr)
+[G]\bigl(\pa[\tG]\bigr)\bigl(\pa[G]\bigr)\\
&\;+3[G][\tG][G][\tG][G]\Bigr).
\end{align*}
As a special case, $[G](0,0;x,t)=g(0,0;x,t)$ satisfies the matrix equation:
\begin{align*}
(\pa_t-\mu_2\pa^2-\mu_3\pa^3-\mu_4\pa^4)g
=&\;2\mu_2g\tg g+3\mu_3\bigl((\pa g)\tg g+g\tg(\pa g)\bigr)\\
&\;+2\mu_4\bigl(2(\pa^2 g)\tg g+g(\pa^2\tg)g+2g\tg(\pa^2g)\\
&\;+(\pa g)(\pa\tg)g+3(\pa g)\tg(\pa g)+g(\pa\tg)(\pa g)\\
&\;+3g\tg g\tg g\bigr).
\end{align*}
In particular, a consistent choice for $\tP$ is $\tP=P^\dag$, whence $\tG=G^\dag$ and $\tilde{g}=g^\dag$. 
\end{theorem}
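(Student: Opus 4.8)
The plan is to substitute the Fredholm relation in the form $G=PU$ with $U\coloneqq(\id+Q)^{-1}$ and $Q=\tP P$, and then compute $\pa_t[G]-\mu_2\pa^2[G]-\mu_3\pa^3[G]-\mu_4\pa^4[G]$ at the operator level before applying the kernel bracket. First I would expand $\pa_tG=(\pa_tP)U+P(\pa_tU)$ using the linear flows $\pa_tP=d(\pa)P$ and $\pa_t\tP=\tilde d(\pa)\tP$ together with $\pa_tU=-U(\pa_tQ)U$ and $\pa_tQ=(\pa_t\tP)P+\tP(\pa_tP)$. By the standing assumption $\tilde\mu_2=-\mu_2$, $\tilde\mu_3=\mu_3$, $\tilde\mu_4=-\mu_4$, the term $P\pa_tU=-PU(\pa_t Q)U$ produces only the operator monomials $PU(\pa^k\tP)PU$ and $PU\tP(\pa^kP)U$ for $k=2,3,4$, while $(\pa_tP)U=(d(\pa)P)U$ produces terms $(\pa^kP)U$. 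In parallel I would expand each spatial term $\pa^kG=\pa^k(PU)$ by the Leibniz rule, substituting the derivatives $\pa U,\dots,\pa^4U$ from Lemma~\ref{lemma:invOp}(ii)--(vi). The term $(\pa^kP)U$ coming from $(\pa_tP)U$ cancels exactly against the $j=0$ Leibniz term in $\mu_k\pa^k(PU)$, so the entire linear dispersion drops out and what remains is a finite sum of operator monomials built from $P$, $\tP$, $U$ and their $x$-derivatives.

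The second stage is to apply the kernel bracket $[\,\cdot\,]$ to this remainder and collapse it into a non-commutative polynomial in $[G]$, $[\tG]$ and their $x$-derivatives. Every surviving monomial is organised so that a factor $\pa(P\tP)$ or $\pa(\tP P)$ sits between two inverse operators, which is exactly the situation covered by the kernel product rule of Lemma~\ref{lemma:kernelproductrule}; repeated use of $\pa U=-U(\pa Q)U$, $\pa V=-V(\pa\tQ)V$, the identities $VP=PU$ and $U\tP=\tP V$ of Corollary~\ref{cor:invId}, and $[VP]=[G]$, $[\tP V]=[\tG]$ turns each monomial into a product of kernel brackets evaluated at the junction points. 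The key identities of Lemma~\ref{lemma:keyidentities} are precisely the reduced forms needed for the recurring blocks $[PU(\pa\tP)]$, $[PU(\pa^2\tP)]$ and their first and second $x$-derivatives, and I would differentiate identities (i)--(v) once or twice more, as required, to service the third- and fourth-order contributions.

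I would then organise the computation by powers of the parameters. The $\mu_2$ contribution should reproduce the non-commutative nonlinear Schr\"odinger cubic $2\mu_2[G][\tG][G]$, and the $\mu_3$ contribution the non-commutative modified Korteweg--de Vries cubic $3\mu_3\bigl((\pa[G])[\tG][G]+[G][\tG](\pa[G])\bigr)$, exactly as in Doikou \emph{et al.}~\cite{DMS20}; these serve as consistency checks on the bookkeeping. The substantive new work is the $\mu_4$ contribution, where four nested applications of the kernel product rule generate the quintic term $3[G][\tG][G][\tG][G]$ alongside the six cubic terms carrying the coefficients $2,1,2$ on the $\pa^2$-type terms and $1,3,1$ on the $\pa$-$\pa$-type terms. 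The main obstacle is precisely this fourth-order bookkeeping: correctly accounting for the binomial weights from Lemma~\ref{lemma:invOp}(vi), the cross terms in $\pa^kQ$, and the several ways the kernel product rule can act on a fourth-order monomial, and then verifying that after all cancellations the coefficients collapse to the asserted integers.

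Finally, setting $y=z=0$ specialises $[G](y,z;x,t)$ to $g(0,0;x,t)$ and yields the matrix equation. For the consistent reduction $\tP=P^\dag$ I would check that the kernel $p^\dag$ of $P^\dag$ solves $\pa_t p^\dag=\tilde d(\pa)p^\dag$: indeed $\pa_t p^\dag=(d(\pa)p)^\dag=\overline{\mu_2}\,\pa^2 p^\dag+\overline{\mu_3}\,\pa^3 p^\dag+\overline{\mu_4}\,\pa^4 p^\dag$, and under the dispersion property $\overline{\mu_2}=-\mu_2$, $\overline{\mu_3}=\mu_3$, $\overline{\mu_4}=-\mu_4$, so $\tP=P^\dag$ obeys the $\tilde d$-flow. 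Then $U=(\id+P^\dag P)^{-1}$ is self-adjoint, and $\tG=\tP V=P^\dag V=UP^\dag=G^\dag$ via $U\tP=\tP V$ of Corollary~\ref{cor:invId}, giving $\tg=g^\dag$ at the kernel level and hence the stated reduction.
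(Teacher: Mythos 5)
Your proposal is correct and follows essentially the same route as the paper: compute $\pa_tG-d(\pa)G$ at the operator level with $G=PU$, exploit $\tilde{\mu}_j=(-1)^{j-1}\mu_j$ so that $\pa_tQ-d(\pa)Q$ collapses to exact $x$-derivatives sandwiched between inverse operators, then collate the $\mu_2$, $\mu_3$ and $\mu_4$ contributions using the kernel product rule and the key identities of Lemma~\ref{lemma:keyidentities}, before specialising to $y=z=0$ and to $\tP=P^\dag$. The fourth-order bookkeeping you correctly flag as the main obstacle is exactly what the paper's Steps~4--8 carry out, organised by the post-factors $\bigl[(\pa^2P)U\bigr]$, $\bigl[(\pa P)U\bigr]$ and $[PU]$, with identity (iii) for $\pa\bigl[PU(\pa^2\tP)\bigr]$ doing the decisive work.
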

\begin{proof}
Recall $PU=VP$ and $\tP V=U\tP$ from Corollary~\ref{cor:invId}.
We split the proof into the following steps.\smallskip

\emph{Step~1: Applying the linear dispersion operator to $G=PU$.}  
With $G=PU$, using the Leibniz rule, that $P$ satisfies the linear
operator equation in Definition~\ref{def:linearoperatorsystem}
and also the identities for $\pa U$, $\pa^2 U$ and so forth from
Lemma~\ref{lemma:invOp}, we find:
\begin{align*}
\pa_t G-\mu_2&\pa^2G-\mu_3\pa^3G-\mu_4\pa^4G\\
=&\;(\pa_tP)U-PU(\pa Q)U-\mu_2\bigl((\pa^2P)U+2(\pa P)(\pa U)+P(\pa^2U)\bigr)\\
&\;-\mu_3\bigl((\pa^3P)U+3(\pa^2 P)(\pa U)+3(\pa P)(\pa^2 U)+P(\pa^3U)\bigr)\\
&\;-\mu_4\bigl((\pa^4P)U+4(\pa^3 P)(\pa U)+6(\pa^2P)(\pa^2U)+4(\pa P)(\pa^3 U)+P(\pa^4U)\bigr)\\
=&\;-PU(\pa_tQ-\mu_2\pa^2Q-\mu_3\pa^3Q-\mu_4\pa^4Q)U\\
&\;+2\mu_2\bigl((\pa P)U(\pa Q)P+P(\pa U)(\pa Q)U\bigr)\\
&\;+3\mu_3\bigl((\pa^2P)U(\pa Q)U+2(\pa P)(\pa U)(\pa Q)U+(\pa P)U(\pa^2 Q)U\\
&\;\qquad\quad+P(\pa^2U)(\pa Q)U+P(\pa U)(\pa^2 Q)U\bigr)\\
&\;+2\mu_4\bigl(2(\pa^3P)U(\pa Q)U+6(\pa^2P)(\pa U)(\pa Q)U+3(\pa^2P)U(\pa^2Q)U\\
&\;\qquad\quad+6(\pa P)(\pa^2U)(\pa Q)U+6(\pa P)(\pa U)(\pa^2Q)U+2(\pa P)U(\pa^3Q)U\\
&\;\qquad\quad+2P(\pa^3U)(\pa Q)U+3P(\pa^2U)(\pa^2Q)U+2P(\pa U)(\pa^3Q)U\bigr).
\end{align*}
For the moment we focus on the first term on the right just above.
Using $\tilde{\mu}_2=-\mu_2$, $\tilde{\mu}_3=\mu_3$ and $\tilde{\mu}_4=-\mu_4$, we observe:
\begin{align*}
\pa_tQ-\mu_2&\pa^2Q-\mu_3\pa^3Q-\mu_4\pa^4Q\\
=&\;(\pa_t\tP)P+\tP(\pa_tP)-\mu_2\bigl((\pa^2\tP)P+2(\pa\tP)(\pa P)+\tP(\pa^2P)\bigr)\\
&\;-\mu_3\bigl((\pa^3\tP)P+3(\pa^2\tP)(\pa P)+3(\pa\tP)(\pa^2P)+\tP(\pa^3P)\bigr)\\
&\;-\mu_4\bigl((\pa^4\tP)P+4(\pa^3\tP)(\pa P)+6(\pa^2\tP)(\pa^2P)+4(\pa\tP)(\pa^3 P)+\tP(\pa^4P)\bigr)\\
=&\;-2\mu_2\pa\bigl((\pa\tP)P\bigr)-3\mu_3\pa\bigl((\pa\tP)(\pa P)\bigr)\\
&\;-2\mu_4\pa\bigl((\pa^3\tP)P+(\pa^2\tP)(\pa P)+2(\pa\tP)(\pa^2P)\bigr).
\end{align*}
Our proof now proceeds as follows. We substitute this last expression into the
first term on the right in the previous equation and apply the kernel bracket operator,
treating the coefficients of $\mu_2$, $\mu_3$ and $\mu_4$ separately.\smallskip

\emph{Step~2: Terms involving $\mu_2$.}
Applying the kernel bracket operator to these terms on the right in Step~1, modulo $2\mu_2$ we have:
\begin{align*}
\bigl[PU\pa\bigl((\pa\tP) P\bigr)U+(\pa P)U(\pa Q)P+P&(\pa U)(\pa Q)U\bigr]\\
=&\;[PU(\pa\tP)+(\pa P)U\tP+P(\pa U)\tP][PU]\\
=&\;\bigl[\pa(PU\tP)\bigr][G]\\
=&\;[G][\tG][G],
\end{align*}
where we used the result (i) from the key identities Lemma~\ref{lemma:keyidentities}.
We have thus generated the term involving $\mu_2$ on the right stated in the Theorem.\smallskip

\emph{Step~3: Terms involving $\mu_3$.}
Applying the bracket operator to these terms on the right in Step~1, 
using $\pa^2Q=\pa\bigl((\pa\tP)P+\tP(\pa P)\bigr)$, then modulo $3\mu_3$, we observe:
\begin{align*}
  \bigl[PU\pa&\bigl((\pa\tP)(\pa P)\bigr)U+(\pa^2P)U(\pa Q)U
    +2(\pa P)(\pa U)(\pa Q)U\\
    &\;+(\pa P)U(\pa^2 Q)U
    +P(\pa^2U)(\pa Q)U+P(\pa U)(\pa^2 Q)U\bigr]\\
=&\;\bigl[(\pa^2P)U\tP+2(\pa P)(\pa U)\tP+(\pa P)U(\pa\tP)+P(\pa^2U)\tP+P(\pa U)(\pa\tP)\bigr][PU]\\
&\;+\bigl[PU(\pa\tP)+(\pa P)U\tP+P(\pa U)\tP\bigr]\bigl[(\pa P)U\bigr].
\end{align*}
The pre-factors of $\bigl[(\pa P)U\bigr]$ simplify to $\bigl[\pa(PU\tP)\bigr]=[G][\tG]$ using 
the key identities Lemma~\ref{lemma:keyidentities}. Note $\bigl[(\pa P)U\bigr]$ itself is given by
\begin{equation*}
  \bigl[(\pa P)U\bigr]=\bigl[\pa(PU)\bigr]-\bigl[P(\pa U)\bigr]
  =\pa[G]+\bigl[PU(\pa Q)U\bigr]=\pa[G]+[PU\tP][G].
\end{equation*}
The pre-factors of $[PU]$ from the first expression of this step simplify to
\begin{align*}
  \bigl[\pa^2(PU\tP)-(\pa P)U(\pa\tP)-P&(\pa U)(\pa\tP)-PU(\pa^2\tP)(\pa\tP)\bigr]\\
  &=\bigl[\pa^2(PU\tP)\bigr]-\bigl[\pa\bigl(PU(\pa\tP)\bigr)\bigr]\\
  &=\pa\bigl([G][\tG]\bigr)-[G]\bigl(\pa[\tG]\bigr)-[G][\tG][PU\tP]\\
  &=\bigl(\pa [G]\bigr)[\tG]-[G][\tG][PU\tP],
\end{align*}
using the key identities Lemma~\ref{lemma:keyidentities}. Combining these last two results
we see the terms on the right in the first expression of this step are:
\begin{equation*}
  [G][\tG]\bigl(\pa[G]+[PU\tP][G]\bigr)+\bigl((\pa [G])[\tG]-[G][\tG][PU\tP]\bigr)[G],
\end{equation*}
which simplify to the terms involving $3\mu_3$ on the right stated in the Theorem.\smallskip

\emph{Step~4: Terms involving $\mu_4$.}
In practice we split our computation for these terms into several successive steps.
We apply the kernel bracket operator to these terms on the right in Step~1 and
use $\pa^2Q=\pa\bigl((\pa\tP)P+\tP(\pa P)\bigr)$ as well as
$\pa^2Q=\pa\bigl((\pa^2\tP)P+2(\pa\tP)(\pa P)+\tP(\pa^2 P)\bigr)$.
Collating terms with respective post-factors
$\bigl[(\pa^2 P)U\bigr]$, $\bigl[(\pa P)U\bigr]$ and $[PU]$,
then modulo $2\mu_4$, we observe:
\begin{align*}
\bigl[&2(\pa^3P)U(\pa Q)U+6(\pa^2P)(\pa U)(\pa Q)U+3(\pa^2P)U(\pa^2Q)U\\
&+6(\pa P)(\pa^2U)(\pa Q)U+6(\pa P)(\pa U)(\pa^2Q)U+2(\pa P)U(\pa^3Q)U\\
&+2P(\pa^3U)(\pa Q)U+3P(\pa^2U)(\pa^2Q)U+2P(\pa U)(\pa^3Q)U\bigr]\\
&\;\qquad\quad=2\bigl[PU(\pa\tP)+P(\pa U)\tP+(\pa P)U\tP\bigr]\bigl[(\pa^2P)U\bigr]\\
&\;\qquad\qquad+\bigl[PU(\pa^2\tP)+4P(\pa U)(\pa\tP)+3P(\pa^2U)\tP+4(\pa P)U(\pa\tP)\\
&\;\qquad\qquad\qquad+6(\pa P)(\pa U)\tP+3(\pa^2P)U\tP \bigr]\bigl[(\pa P)U\bigr]\\
&\;\qquad\qquad+\bigl[PU(\pa^3\tP)+2P(\pa U)(\pa^2\tP)+3P(\pa^2U)(\pa\tP)+2P(\pa^3U)\tP\\
&\;\qquad\qquad\qquad+2(\pa P)U(\pa^2\tP)+6(\pa P)(\pa U)(\pa\tP)+6(\pa P)(\pa^2U)\tP\\
&\;\qquad\qquad\qquad+3(\pa^2P)U(\pa\tP)+6(\pa^2P)(\pa U)\tP+2(\pa^3P)U\tP\bigr][PU].
\end{align*}
Observe the pre-factor of the term $\bigl[(\pa^2 P)U\bigr]$ is $2\bigl[\pa(PU\tP)\bigr]=2[G][\tG]$ using 
the key identities Lemma~\ref{lemma:keyidentities}. Note the factor $\bigl[(\pa^2 P)U\bigr]$ itself, using
inverse operator Lemma~\ref{lemma:invOp}, is given by
\begin{align*}
\bigl[(\pa^2 P)U\bigr]=&\;\bigl[\pa^2(PU)\bigr]-2\bigl[(\pa P)(\pa U)\bigr]-\bigl[P(\pa^2 U)\bigr]\\
=&\;\pa^2[G]+2\bigl[(\pa P)U(\pa Q)U\bigr]+2\bigl[P(\pa U)(\pa Q)U\bigr]+\bigl[PU(\pa^2Q)U\bigr]\\
=&\;\pa^2[G]+2\bigl[(\pa P)U\tP][PU]+2\bigl[P(\pa U)P\bigr][PU]\\
&\;+\bigl[PU(\pa\tP)\bigr][PU]+\bigl[PU\tP\bigr]\bigl[(\pa P)U\bigr].
\end{align*}
Note the final term on the right just above, using the second relation from Step~3, is given by 
\begin{equation*}
\bigl[PU\tP\bigr]\bigl[(\pa P)U\bigr]
=\bigl[PU\tP\bigr]\pa[G]+\bigl[PU\tP\bigr]^2[G]
=\bigl[PU\tP\bigr]\pa[G]-\bigl[P(\pa U)\tP\bigr][G].
\end{equation*}
If we substitute this into the previous result we find,
using the key identities Lemma~\ref{lemma:keyidentities}: 
\begin{align*}
\bigl[(\pa^2 P)U\bigr]
=&\;\pa^2[G]+\bigl[\pa(PU\tP)\bigr][G]+\bigl[(\pa P)U\tP\bigr][G]+[PU\tP]\pa[G]\\
=&\;\pa^2[G]+[G][\tG][G]+\bigl[(\pa P)U\tP\bigr][G]+[PU\tP]\pa[G].
\end{align*}
Hence the terms with post-factor $\bigl[(\pa^2 P)U\bigr]$ become
\begin{equation*}
2[G][\tG]\bigl(\pa^2[G]+[G][\tG][G]+\bigl[(\pa P)U\tP\bigr][G]+[PU\tP]\pa[G]\bigr).
\end{equation*}\smallskip

\emph{Step~5: Terms involving $\mu_4$ with post-factor $\bigl[(\pa P)U\bigr]$.}
We consider the terms with post-factor $\bigl[(\pa P)U\bigr]$ from the first
relation in Step~4. Modulo $2\mu_4$ the terms concerned equal 
$\bigl[3\pa^2\big(PU\tP\bigr)-2PU(\pa^2\tP)-2P(\pa U)(\pa\tP)-2(\pa P)U(\pa\tP)\bigr]$
which simplify to:
\begin{align*}
3\pa\bigl([G][\tG]\bigr)-2\bigl[\pa\bigl(PU(\pa\tP)\bigr)\bigr]
&=3\pa\bigl([G][\tG]\bigr)-2\bigl([G]\pa[\tG]+[G][\tG][PU\tP]\bigr)\\
&=3\bigl(\pa[G]\bigr)[\tG]+[G]\bigl(\pa[\tG]\bigr)-2[G][\tG][PU\tP],
\end{align*}
where we used the key identities Lemma~\ref{lemma:keyidentities}. 
Since from the second relation from Step~3 we know
$\bigl[(\pa P)U\bigr]=\pa[G]+\bigl[PU\tP\bigr][G]$, 
the terms with post-factor $\bigl[(\pa P)U\bigr]$ from the first
relation in Step~4 become
\begin{equation*}
\Bigl(3\bigl(\pa[G]\bigr)[\tG]+[G]\bigl(\pa[\tG]\bigr)-2[G][\tG][PU\tP]\Bigr)
\bigl(\pa[G]+[PU\tP][G]\bigr).
\end{equation*}\smallskip

\emph{Step~6: Terms involving $\mu_4$ with post-factor $\pa[G]$.}
We deal with the terms with post-factor $[PU]=[G]$
from the first relation in Step~4 in Step~7 momentarily.
However from Steps~4 and 5, besides the single term with post-factor
$\pa^2[G]$ in the last expression in Step~4, all the other
terms will have post-factors $\pa[G]$ or $[G]$. 
If we collect the terms from the very final relations in both Steps~4 and 5,
then the terms with post-factor $\pa[G]$ simplify to
\begin{equation*}
  \Bigl(3\bigl(\pa[G]\bigr)[\tG]+[G]\bigl(\pa[\tG]\bigr)]\Bigr)\pa[G].
\end{equation*}\smallskip

\emph{Step~7: Terms involving $\mu_4$ with post-factor $[G]$.}
There are terms with post-factor $[G]$ from the final 
relations in both Steps~4 and 5, which we re-introduce presently in Step~8.
However momentarily we focus on the terms with post-factor $[G]$
from the first relation in Step~4. Modulo $2\mu_4$ the terms concerned are equal to
\begin{align*}
\bigl[&2\pa^3(PU\tP)-PU(\pa^3\tP)-4P(\pa U)(\pa^2\tP)-3P(\pa^2U)(\pa\tP)\\
&\quad-4(\pa P)U(\pa^2\tP)-6(\pa P)(\pa U)(\pa\tP)-3(\pa^2P)U(\pa\tP)\bigr]\\
&=2\pa^2\bigl([G][\tG]\bigr)-\bigl[PU(\pa^3\tP)+4\bigl(\pa(PU)\bigr)(\pa^2\tP)
+3\bigl(\pa^2(PU)\bigr)(\pa\tP)\bigr]\\
&=2\pa^2\bigl([G][\tG]\bigr)-3\pa^2\bigl[\bigl(PU(\pa\tP)\bigr)\bigr]
+2\bigl[PU(\pa^3\tP)\bigr]+2\bigl[\bigl(\pa(PU)\bigr)(\pa^2\tP)\bigr]\\
&=2\pa^2\bigl([G][\tG]\bigr)-3\pa\bigl([G]\pa[\tG]\bigr)-3\bigl([G][\tG]\bigr)^2
-3\pa\bigl([G][\tG]\bigr)[PU\tP]+2\pa\bigl[PU(\pa^2\tP)\bigr],
\end{align*}
where we used relation (v) in the key identities Lemma~\ref{lemma:keyidentities} in the final step as
well as combined the final two terms on the right.
\smallskip

\emph{Step~8: Combining all terms involving $\mu_4$.}
We now combine all the terms involving $\mu_4$ together.
These are all the terms on the right in the final relation in Step~7,
for which we need to include the post-factor $[G]$, 
the two terms from the final expression in Step~6 which have
the post-factor $\pa[G]$, all the terms with
post-factor $[G]$ from the final expression in Step~5,
and finally all the terms with post-factor $[G]$ as well as the single
term with post-factor $\pa^2[G]$ from the final expression in Step~4.
Modulo $2\mu_4$, using that $[PU\tP]^2=\bigl[PU\pa(\tP P)U\tP\bigr]=-[P(\pa U)\tP]$,
these terms combine to give:
\begin{gather*}
  2[G][\tG]\pa^2[G]-\bigl([G][\tG]\bigr)^2[G]+3\bigl(\pa[G]\bigr)[\tG]\pa[G]\\
  +[G]\bigl(\pa[\tG]\bigr)\pa[G]+2\pa^2\bigl([G][\tG]\bigr)[G]-3\pa\bigl([G]\pa[\tG]\bigr)[G]\\
  +2\Bigl([G][\tG]\bigl[\bigl(\pa(PU)\bigr)\tP]-[G]\bigl(\pa[\tG]\bigr)[PU\tP]
  +\pa\bigl[PU(\pa^2\tP)\bigr]\Bigr)[G].
\end{gather*}
Substituting the result (iii) from the key identities Lemma~\ref{lemma:keyidentities}
and combining like terms then gives the first statement of the theorem.\smallskip

\emph{Step~9: Remaining statements.}
The second statement is a special case that follows by setting $y=z=0$ in the first statement.
For the third statement, if we suppose $\tP=P^\dag$, then $U=(\id+P^\dag P)^{-1}=U^\dag$ and
$\tG=P^\dag(\id+PP^\dag)^{-1}=(\id+P^\dag P)^{-1}P^\dag=UP^\dag=(PU)^\dag=G^\dag$.
\end{proof}
Judicious choices for $\tP$ generate reverse space-time and reverse time nonlocal versions
of the quintic nonlinear Schr\"odinger equations stated in Theorem~\ref{thm:main} as follows.
\begin{corollary}[Reverse space-time nonlocal matrix quintic nonlinear Schr\"odinger equation]\label{cor:reveresespacetime}
Suppose $\mu_3=0$ and recall we assume $\mu_2$ and $\mu_4$ are pure imaginary parameters.
If we choose $\tP(x,t)=P^{\mathrm{T}}(-x,-t)$, where $P^{\mathrm{T}}$ is the operator
whose matrix kernel is the transpose of the matrix kernel corresponding to $P$,
then the matrix kernel $[G]=[G](y,z;x,t)$ satisfies, for every $t\in[0,T]$,
the reverse space-time nonlocal matrix quintic nonlinear Schr\"odinger kernel equation given by the
equation for $[G]$ stated in Theorem~\ref{thm:main} with $\tG(x,t)=G^{\mathrm{T}}(-x,-t)$.
Further, setting $y=z=0$ generates the reverse space-time nonlocal matrix quintic nonlinear Schr\"odinger equation
for $g=g(0,0;x,t)$ stated in Theorem~\ref{thm:main} with
$\tilde{g}(0,0;x,t)=g^{\mathrm{T}}(0,0;-x,-t)$.
Similarly if we choose $\tP(x,t)=P^{\mathrm{T}}(x,-t)$ then we generate the
corresponding reverse time only nonlocal equations.
\end{corollary}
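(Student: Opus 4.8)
The plan is to show that the specific choice $\tP(x,t)=P^{\mathrm{T}}(-x,-t)$ is an admissible one within the linear operator system of Definition~\ref{def:linearoperatorsystem}, so that Theorem~\ref{thm:main} applies, and then to identify the resulting $\tG$. There are three things to check: that $\tP$ solves the correct linear evolution equation with $\tilde{\mu}_j=(-1)^{j-1}\mu_j$; that $\tP$ is a Hankel operator in the sense of Definition~\ref{def:Hankel}; and that the Fredholm solution $\tG$ of $\tP=\tG(\id+\tQ)$ is precisely $G^{\mathrm{T}}(-x,-t)$. Only then does substituting this last relation into the equation of Theorem~\ref{thm:main} yield the stated reverse space-time equation, and setting $y=z=0$ the scalar-kernel version with $\tilde{g}(0,0;x,t)=g^{\mathrm{T}}(0,0;-x,-t)$.

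First I would verify the linear-evolution and Hankel requirements. Since $\mu_3=0$, the dispersion operator is $d(\pa)=\mu_2\pa^2+\mu_4\pa^4$, which is even in $\pa$ and hence invariant under $x\mapsto-x$, while $\pa_t$ changes sign under $t\mapsto-t$; transposition is linear and commutes with both $\pa_x$ and $\pa_t$. Applying these three operations to $\pa_tP=d(\pa)P$ therefore gives $\pa_t\tP=-d(\pa)\tP$, that is $\tilde{\mu}_2=-\mu_2$ and $\tilde{\mu}_4=-\mu_4$, exactly the relation $\tilde{\mu}_j=(-1)^{j-1}\mu_j$ imposed in Section~\ref{sec:main}. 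This is where $\mu_3=0$ is essential: the odd term $\mu_3\pa^3$ would transform with a sign incompatible with $\tilde{\mu}_3=\mu_3$ under the reverse-time reflection $t\mapsto-t$ alone, which is precisely why the reverse-time choice treated at the end of the statement forces $\mu_3=0$. I would then check that the reverse space-time operation preserves the additive structure of Definition~\ref{def:Hankel}, carried out at the level of the scattering-data kernel function $p$ whose single argument is reflected; for the reverse-time choice $\tP(x,t)=P^{\mathrm{T}}(x,-t)$ this preservation is immediate.

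Second, I would identify $\tG$ by mimicking Step~9 of the proof of Theorem~\ref{thm:main}, where $\tP=P^\dag$ forced $\tG=G^\dag$. Write $\rho$ for the operation $A(x,t)\mapsto A^{\mathrm{T}}(-x,-t)$, so that $\tP=\rho P$. The structural facts I would establish are that $\rho$ is an involution, that it reverses operator products, $\rho(AB)=(\rho B)(\rho A)$, and that it fixes $\id$; the last two give $\rho(A^{-1})=(\rho A)^{-1}$. Then $\rho Q=\rho(\tP P)=(\rho P)(\rho\tP)=\tP P=Q$, so $U=(\id+Q)^{-1}$ is $\rho$-fixed, and likewise $V$. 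Consequently $\rho\tG=\rho(\tP V)=(\rho V)(\rho\tP)=VP=PU=G$, using $VP=PU$ from Corollary~\ref{cor:invId}; applying $\rho$ once more gives $\tG=\rho G=G^{\mathrm{T}}(-x,-t)$, as required. With this in hand, Theorem~\ref{thm:main}, once its hypotheses are confirmed, delivers the equation for $[G]$, and substituting $\tG(x,t)=G^{\mathrm{T}}(-x,-t)$ identifies the nonlinearity as a genuine reverse space-time nonlocal one.

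The main obstacle I anticipate is reconciling the two roles that the transpose-reflection operation must play simultaneously. On the one hand, the derivation of $\tG=\rho G$ needs $\rho$ to be a product-reversing anti-automorphism, which points to the full operator transpose (swap of kernel arguments together with matrix transpose) composed with the space-time reflection. On the other hand, applying Theorem~\ref{thm:main} requires $\tP$ to remain Hankel and the key identities of Lemma~\ref{lemma:keyidentities}, which rest on the kernel product rule of Lemma~\ref{lemma:kernelproductrule}, to hold for the pair $P,\tP$. Pinning down the precise kernel-level meaning of $P^{\mathrm{T}}(-x,-t)$ so that the reflection of the additive variable is compatible with both demands, and checking that, because $\mu_3=0$ leaves only even-order derivatives, no spurious terms are generated by the spatial reflection in the product rule, is the delicate part; the reverse-time case, where no spatial reflection occurs and $\tP$ is manifestly Hankel, serves as the clean check that isolates exactly this point.
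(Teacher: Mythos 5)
Your proposal is correct and follows essentially the same route as the paper: the paper likewise checks that $\tP(x,t)=P^{\mathrm{T}}(-x,-t)$ is consistent with the two linear evolution equations and then deduces $\tG(x,t)=G^{\mathrm{T}}(-x,-t)$ by comparing $\tG=\tP V$ with the transposed, reflected form of $G=PU$ via the push-through identity $P^{\mathrm{T}}(\id+PP^{\mathrm{T}})^{-1}=(\id+P^{\mathrm{T}}P)^{-1}P^{\mathrm{T}}$. Your packaging of this as the product-reversing involution $\rho$, together with the explicit attention to the kernel-level meaning of the transpose on Hankel operators, is simply a more careful rendering of the paper's two-line argument.
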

Before proving this corollary we give an example of the
reverse space-time nonlocal matrix quintic nonlinear Schr\"odinger equation
whose integrability is established by the corollary.
\begin{example}\label{ex:reversespace-time}
Suppose the complex matrix-valued function $p=p(x,t)$ satisfies
the linear partial partial differential equation $\pa_tp=\mu_2\pa^2p+\mu_4\pa^4p$
where $\mu_2$ and $\mu_4$ are pure imaginary parameters.
Then $\tilde{p}(x,t)=p^{\mathrm{T}}(-x,-t)$ satisfies the linear partial differential equation
$\pa_t\tilde p=-\mu_2\pa^2\tilde p-\mu_4\pa^4\tilde p$, consistent with the assumptions that
$\tilde{\mu_2}=-\mu_2$ and $\tilde{\mu_4}=-\mu_4$ at the very beginning of this section.
Suppose $P$ and $\tP$ are Hilbert--Schmidt Hankel operators with kernels $p$ and $\tilde{p}$, respectively.
Then Corollary~\ref{cor:reveresespacetime} establishes that if the operator $G$
satisfies $P=G(\id+Q)$ with $Q=\tP P$, then 
$[G](0,0;x,t)=g(0,0;x,t)$, with $\tg(0,0;x,t)=g^{\mathrm{T}}(0,0;-x,-t)$, satisfies the matrix equation:
\begin{align*}
(\pa_t-\mu_2\pa^2-\mu_4\pa^4)g
=&\;2\mu_2g\tg g+2\mu_4\bigl(2(\pa^2 g)\tg g+g(\pa^2\tg)g+2g\tg(\pa^2g)\\
&\;+(\pa g)(\pa\tg)g+3(\pa g)\tg(\pa g)+g(\pa\tg)(\pa g)+3g\tg g\tg g\bigr).
\end{align*}
\end{example}
\begin{proof} (\emph{of Corollary~\ref{cor:reveresespacetime}})
Recall since $\tilde{\mu}_2=-\mu_2$ and $\tilde{\mu}_4=-\mu_4$ the operators $P$ and $\tP$
satisfy the respective linear PDEs $\pa_tP=\mu_2\pa^2P+\mu_4\pa^4P$ and
$\pa_t\tP=-\mu_2\pa^2\tP-\mu_4\pa^4\tP$. The choice $\tP(x,t)=P^{\mathrm{T}}(-x,-t)$ is consistent
with these two equations. Recall $G=PU$ and $\tG=\tP V$ where $U=(\id+\tP P)^{-1}$ and $V=(\id+P\tP)^{-1}$.
If we substitute $\tP(x,t)=P^{\mathrm{T}}(-x,-t)$ into these expressions for $G$ and $\tG$ we observe
\begin{align*}
G^{\mathrm{T}}(-x,-t)&=\bigl(\id+P^{\mathrm{T}}(-x,-t)P(x,t)\bigr)^{-1}P^{\mathrm{T}}(-x,-t),
\intertext{while}
\tG(x,t)&=P^{\mathrm{T}}(-x,-t)\bigl(\id+P(x,t)P^{\mathrm{T}}(-x,-t)\bigr)^{-1}.
\end{align*}
We deduce $\tG(x,t)=G^{\mathrm{T}}(-x,-t)$ and the reverse space-time results follow.
The reverse time only result follows immediately.
\end{proof}
\begin{remark}[Sign of the nonlinear terms]
Suppose in the final statement of Theorem~\ref{thm:main} we instead
make the choice $\tP=-P^\dag$. This choice is still consistent with
the linear partial differential equations satisfied by $P$ and $\tP$
in the linear operator system in Definition~\ref{def:linearoperatorsystem},
with $\tilde{\mu}_2=-\mu_2$, $\tilde{\mu}_3=\mu_3$ and $\tilde{\mu}_4=-\mu_4$
and $\mu_2$ and $\mu_4$ chosen pure imaginary while $\mu_3$ chosen to be real.
With this choice for $\tP$ we observe $V=(\id-PP^\dag)^{-1}=V^\dag$ and thus
$\tG=\tP V=-P^\dag V=-(VP)^\dag=-G^\dag$. Hence with the choice $\tP=-P^\dag$,
we generate the equations for $[G]$ and $g$ shown in Theorem~\ref{thm:main}
but with $\tG=-G^\dag$ instead. This has the effect of changing the sign of
all the degree three terms while the sign of the quintic term is unchanged.
\end{remark}
\begin{remark}[Parameter restrictions]\label{rmk:pracissues}
For Theorem~\ref{thm:main} we assumed $\mu_2,\mu_4\in\mathrm{i}\R$ and $\mu_3\in\R$
to ensure $d=d(\pa)$ satisfied the dispersion property in Definition~\ref{def:dispersionproperty}.
This ensures suitable regularity for the kernel function $p=p(x,t)$ which solves
the underlying linear partial differential equation. That in turn ensures a suitable solution
to the linear Fredholm equation for $G$; recall the discussion in Remark~\ref{remark:well-posedness}.
If the parameters $\mu_2$, $\mu_3$ and $\mu_4$ are more general so
the dispersion property does not hold,
then establishing suitable regularity for $p=p(x,t)$ requires further investigation.
With regards the choices of the parameters $\tilde{\mu}_j$ we made, for $j=2,3,4$,
we in principle could choose these differently to $\tilde{\mu}_j=(-1)^{j-1}\mu_j$. 
For example, the choice $\tilde{\mu}_4=-2\mu_4$ appears to be consistent and may
lead to different equations.
\end{remark}

\section{Numerical simulations}\label{sec:numericalsimulations}
We present numerical simulations for the commutative version
of the fourth order quintic nonlinear Schr\"odinger equation
from Theorem~\ref{thm:main}. The commutative version of these
equations appear most frequently in practice in the modelling of
ultrashort pulse propagation in optical fibres; see for example
Kang \textit{et al.} \cite{KangXiaMa}, Guo \textit{et al.} \cite{GuoHaoGu} 
or Wang \textit{et al.} \cite{WangPorsezianHe}.
We chose the parameter values $\mu_2=-\mathrm{i}$, $\mu_3=1$ and $\mu_4=\mathrm{i}$
for the purposes of simulation. We provide two independent 
simulations to generate approximations to the solution
of the commutative version of the equation for $g=g(0,0;x,t)$ in Theorem~\ref{thm:main}
as follows: (i) Direct numerical simulation based on an adaptation of a
well-known spectral algorithm. This advances the approximate solution in Fourier space,
denoted by $u_m$, in successive time steps; 
(ii) Generation of an approximation to the solution, denoted by $\hat g$,
by using the Grassmann--P\"oppe method, i.e.\/ based on the analytical
linearisation approach we have outlined in Sections~\ref{sec:preliminaries} and \ref{sec:main}.
For both numerical methods we chose an initial profile function $p_0=p_0(x)$
on the interval $[-L/2,L/2]$ for some fixed domain length $L>0$.
Recall from Definition~\ref{def:linearoperatorsystem}, assuming we set $\tilde P=P^\dag$, then given 
the Hankel operator $P=P(x,t)$ or equivalently its kernel $p=p(y+z+x;t)$,
the functions $q=q(y,z;x,t)$ and $g=g(y,z;x,t)$ are prescribed by the equations
\begin{align*}
q(y,z;x,t)&=\int_{-L/2}^0p^\dag(y+\zeta+x;t)p(\zeta+z+x;t)\,\rd\zeta
\intertext{and}
p(y+z+x;t)&=g(y,z;x,t)+\int_{-L/2}^0g(y,\zeta;x,t)q(\zeta,z;x,t\,\rd\zeta.
\end{align*}
Note in this commutative setting with $p$ scalar, the quantity $p^\dag=p^\ast$
in the definition for $q$ above is just the complex conjugate of $p$.
In our simulations we chose the initial profile $p_0(x)=0.15\,\mathrm{sech}(x/40)$ with $L=40$.

First, let us outline the direct numerical simulation method.
The first task in this method is to compute the initial data $g_0$.
To achieve this we compute the initial auxiliary data function $q_0=q_0(y,z;x)$
and then initial data profile $g_0=g_0(y,z;x)$ by numerically solving the equations
just above with $p$, $q$ and $g$ replaced respectively by $p_0$, $q_0$ and $g_0$.
Note $q_0$ is just defined in terms of the data function $p_0$ and the
integral involved is approximated using a left-hand Riemann sum.
On the other hand we must solve the second equation above for $g_0$.
We achieve this by approximating the integral again by a left-hand Riemann sum
and solving the resulting large linear algebraic system of equations.
Hence, we generated the approximation $\hat g_0=\hat g_0(y,z;x)$
from the profile for $p_0$ stated using the procedure just outlined.
We set the initial data for the direct numerical method $u_0$ to be the
Fourier transform of $\hat g_0(0,0;x)$. 

The Fourier spectral method we used to advance the commutative fourth order 
quintic nonlinear Schr\"odinger solution forward in time is as follows.
Suppose $\Psi=\Psi(g,\pa g,\pa^2 g)$ denotes the quintic nonlinear function
of $g$, $\pa g$ and $\pa^2 g$ with $\tilde g=g^\ast$ shown on the right-hand side
in the evolution equation for $g$ given in Theorem~\ref{thm:main}.
Naturally in the current commutative setting, the functional form for $\Psi$
shown can be further simplified. The Fourier spectral split-step method we use is given by:
\begin{align*}
v_m&=\exp\bigl(\Delta t(\mu_2K^2+\mu_3K^3+\mu_4K^4)\bigr)u_m,\\
u_{m+1}&=v_m
+\Delta t\,\mathcal F\Bigl(\Psi\bigl(\mathcal F^{-1}(v_m),\mathcal F^{-1}(Kv_m),\mathcal F^{-1}(K^2v_m)\bigr)\Bigr),
\end{align*}
where $\mathcal F$ denotes the Fourier transform and $K$
is the diagonal matrix of Fourier coefficients $2\pi\mathrm{i}k$.
In practice we use the fast Fourier transform and its inverse. 
We chose $\Delta t=0.001$ and the number of Fourier modes is $2^8$.
The result is shown in the top two panels in Figure~\ref{fig:qNLS}.

Second, in the Grassmann--P\"oppe method, given the initial profile 
$p_0=p_0(x)$ we analytically advance the Fourier transform $\mathcal F(p_0)$ of the initial
data in Fourier space to any time of interest $t\in[0,T]$, where $T=100$.
To generate $p=p(x,t)$ at that time, we then compute the inverse Fourier transform.
In other words we compute the approximation
$\hat p(x,t)=\mathcal F^{-1}(\exp(t(\mu_2K^2+\mu_3K^3+\mu_4K^4))\mathcal F(p_0))$
for any given time $t$. We then compute an approximation $\hat q=\hat q(y,z;x,t)$
at the time $t$ by approximating the integral in the prescription for $q$ above
by a left-hand Riemann sum. To generate an approximation $\hat g$ for $g=g(y,z;x,t)$
at that time we approximate the integral in the linear Fredholm integral equation prescribing 
$g$ above by a left-hand Riemann sum and solve the resulting linear algebraic system
of equations for $\hat g=\hat g(y,z;x,t)$. An approximation to the solution
of the commutative fourth order quintic nonlinear Schr\"odinger equation is
then $\hat g(0,0;x,t)$. The result is shown in the middle two panels in Figure~\ref{fig:qNLS}.
The magnitude of the difference between $\hat g(0,0;x,t)$ and $u_m$ 
is shown in the lower left panel. The lower right panel shows the magnitude of 
the Fredholm determinant $\det(\id+\hat Q(x,t))$, where $\hat Q=\hat Q(x,t)$
is the linear operator associated with the kernel function $\hat q=\hat q(y,z;x,t)$.

\begin{figure}
  \begin{center}
  \includegraphics[width=6cm,height=5cm]{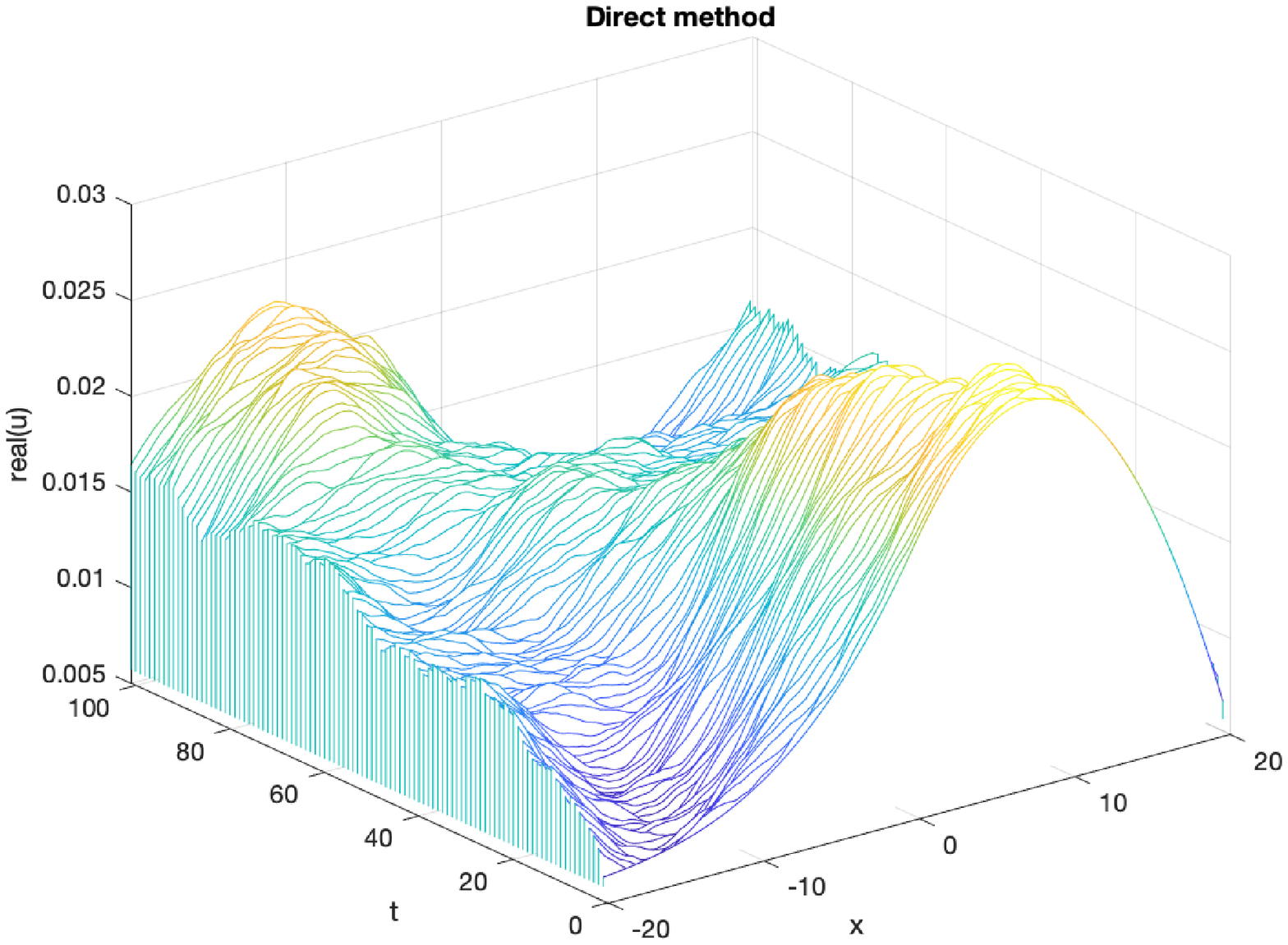}
  \includegraphics[width=6cm,height=5cm]{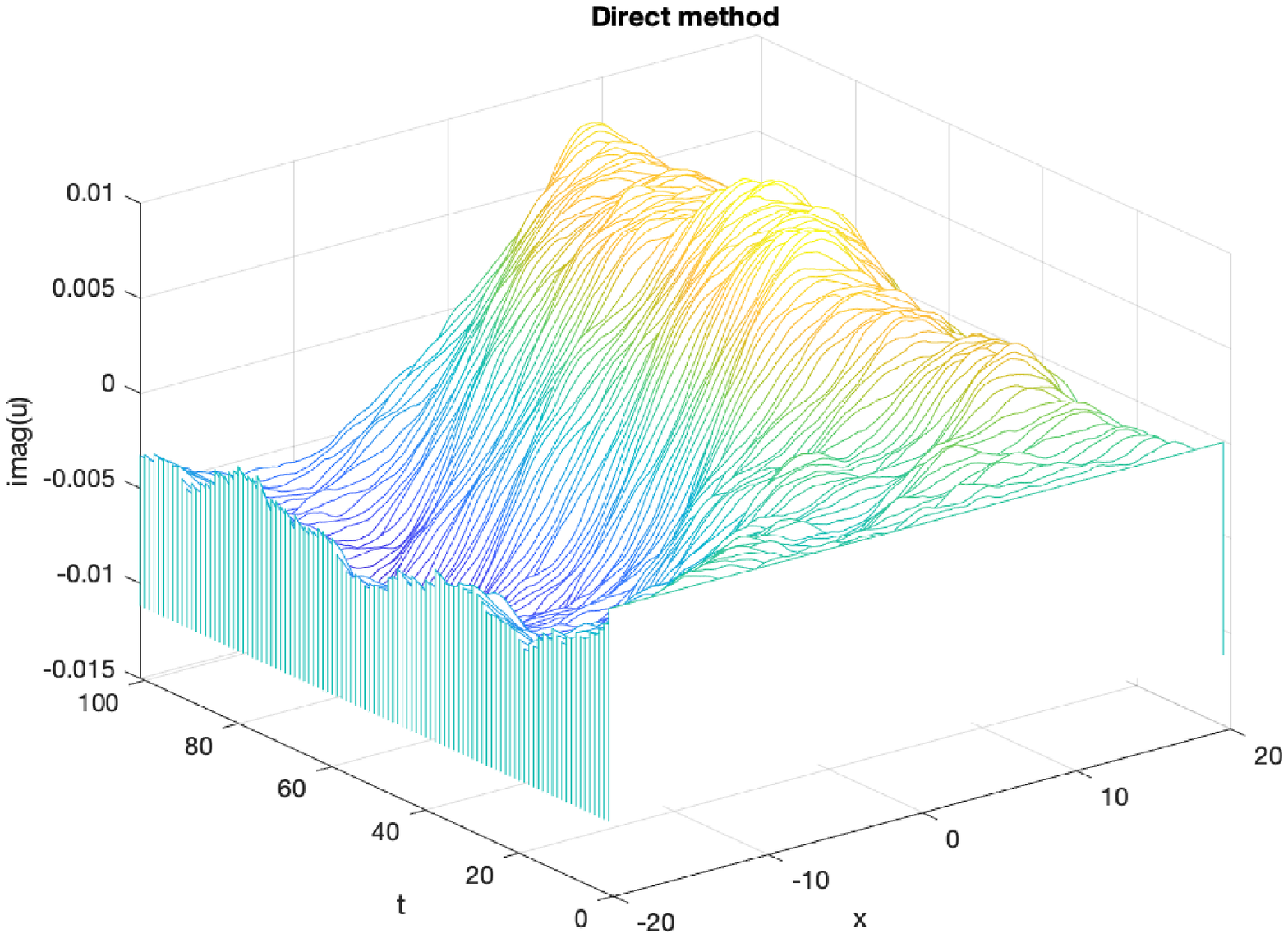}\\
  \includegraphics[width=6cm,height=5cm]{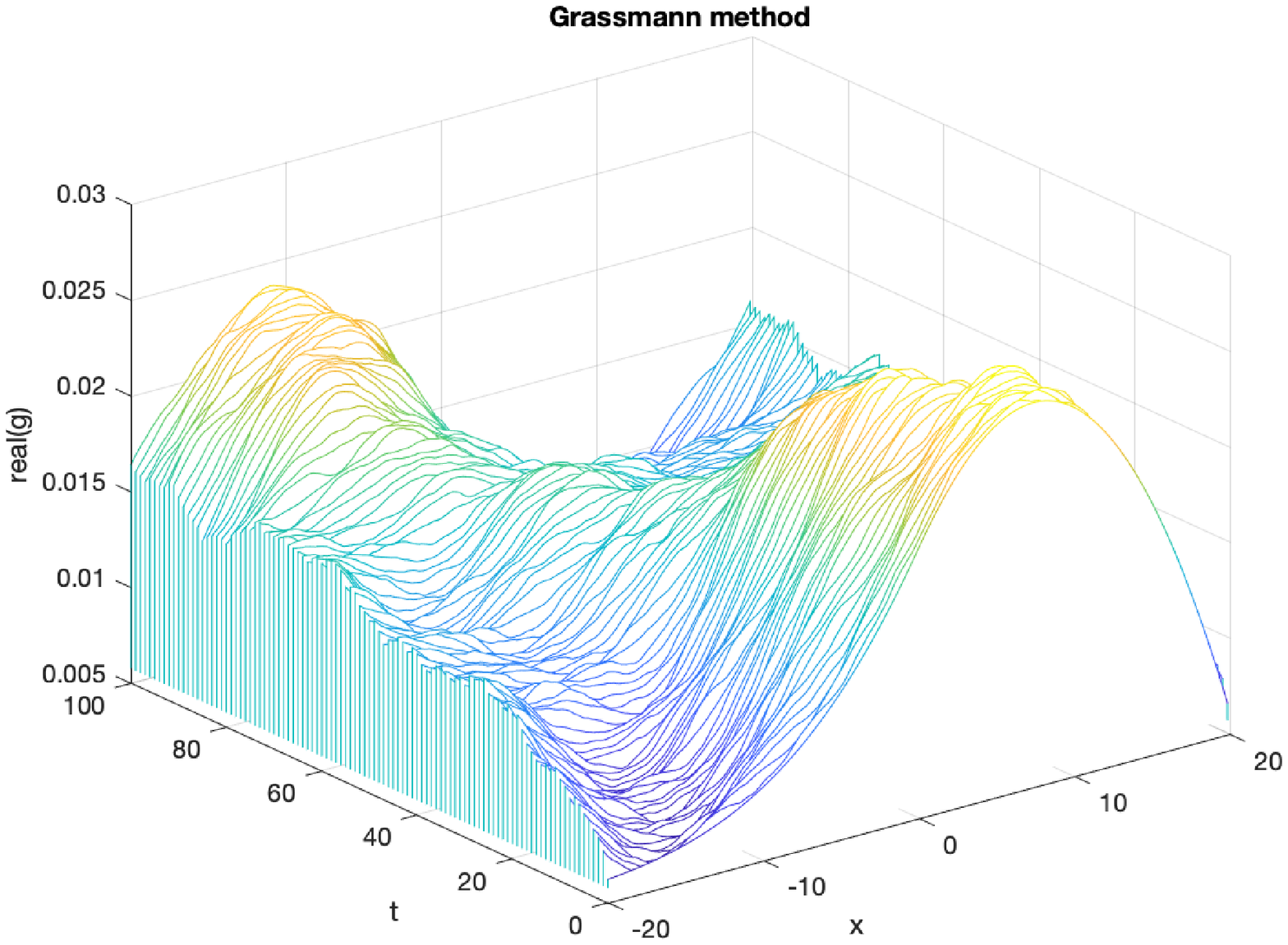}
  \includegraphics[width=6cm,height=5cm]{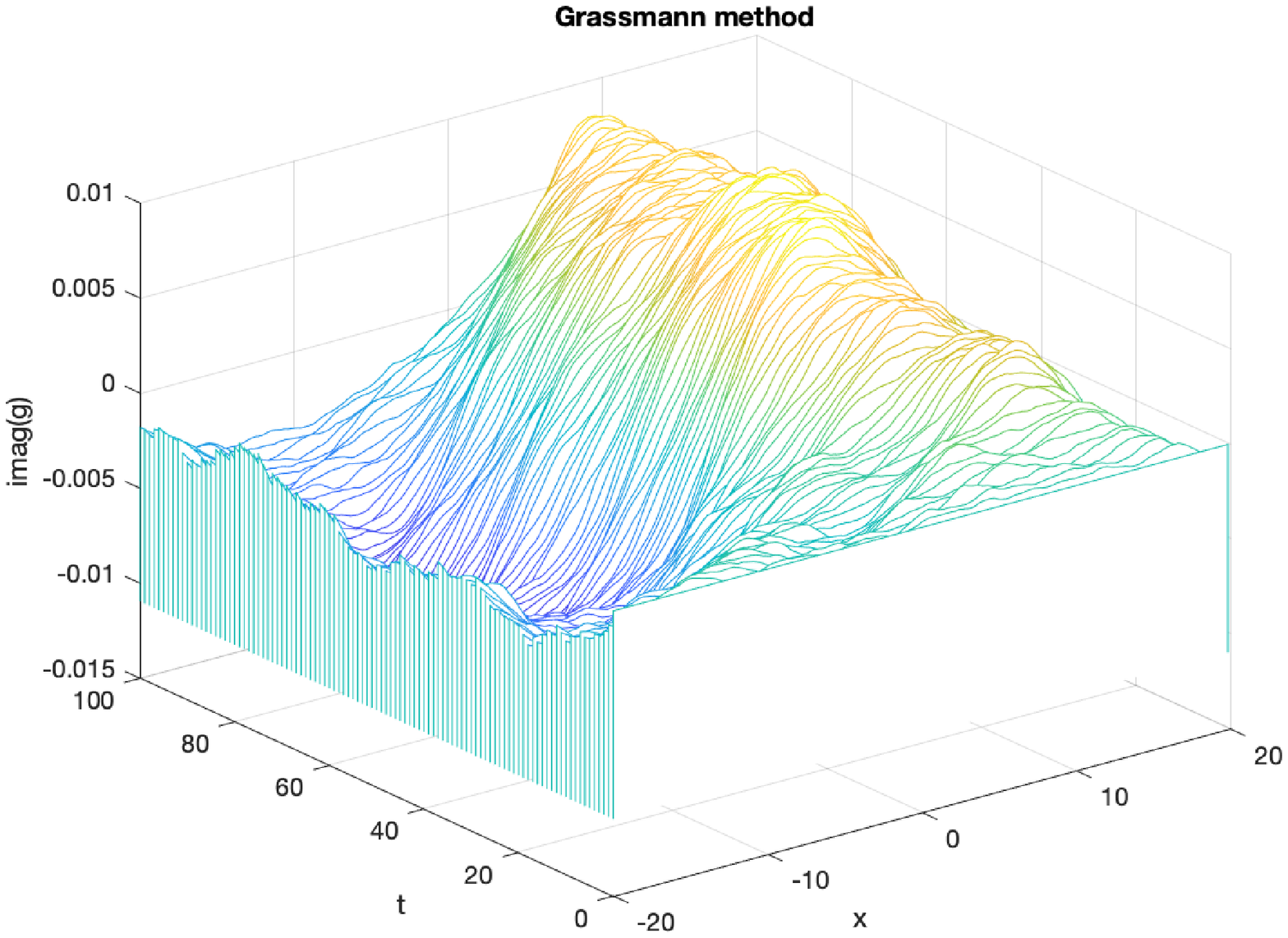}\\
  \includegraphics[width=6cm,height=5cm]{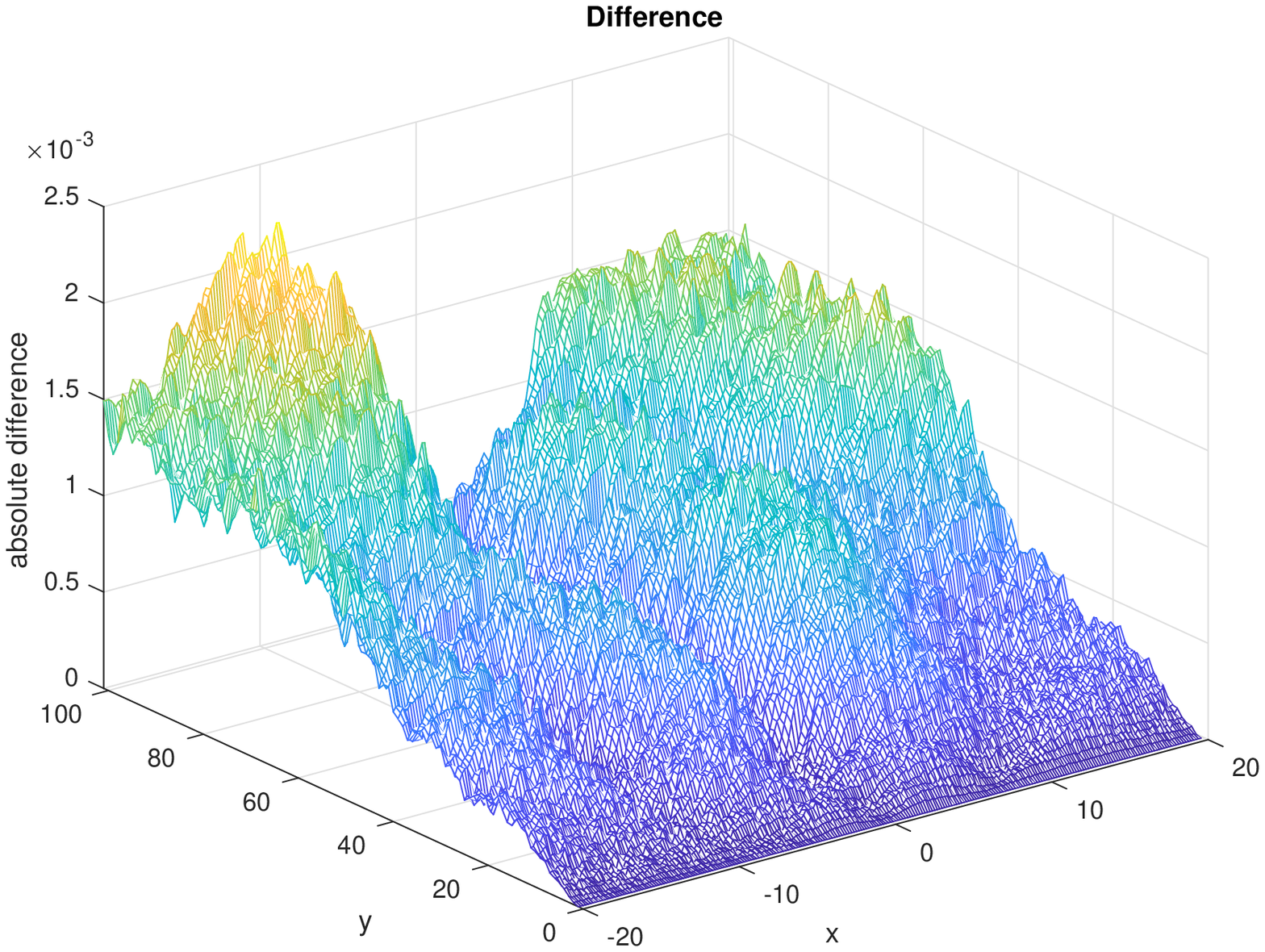}
  \includegraphics[width=6cm,height=5cm]{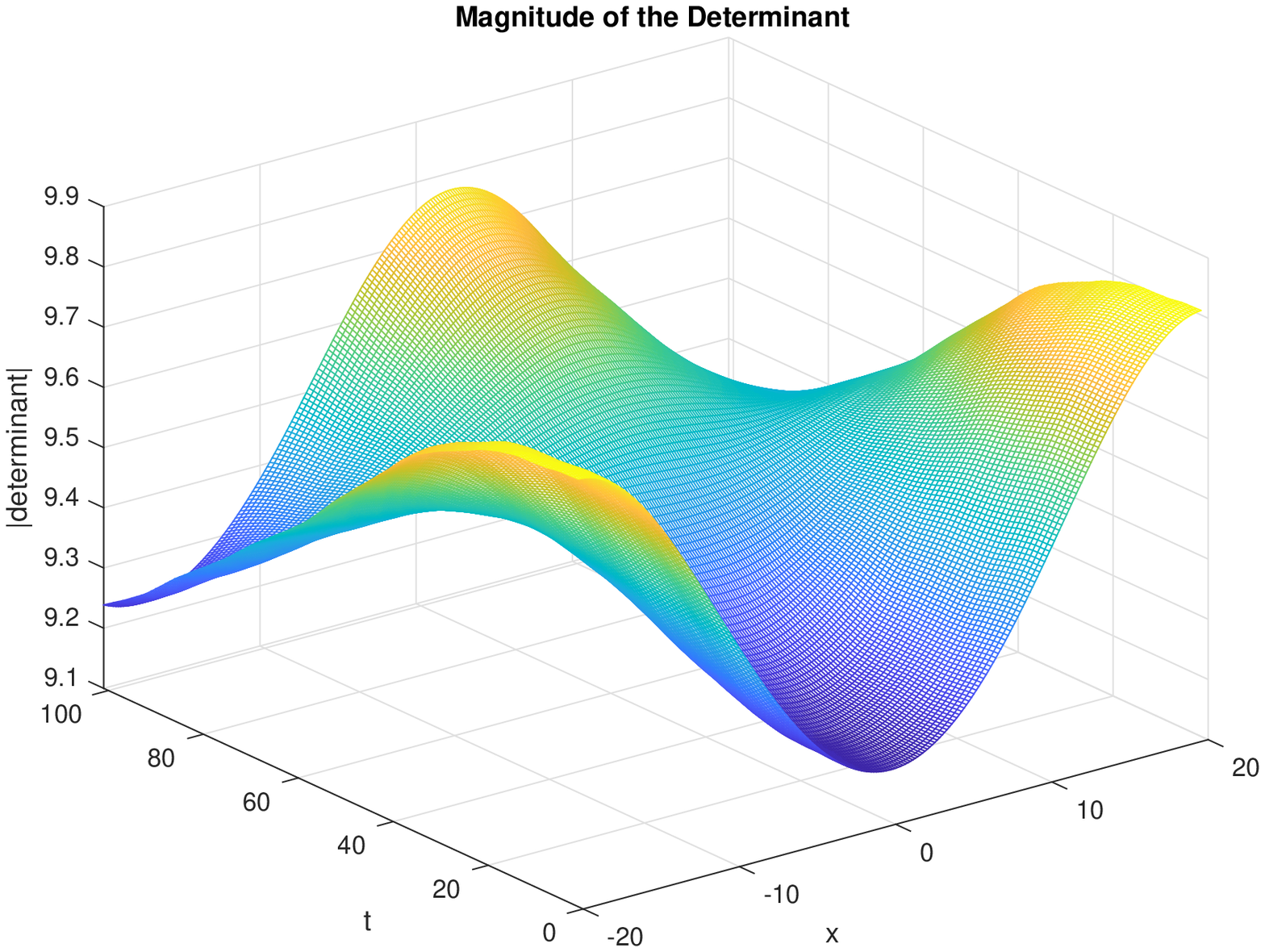}\\
  \end{center}
\caption{We plot the solution to the commutative version of the
fourth order quintic nonlinear Sch\"odinger equation from Theorem~\ref{thm:main}.
We chose the parameter values $\mu_2=-\mathrm{i}$, $\mu_3=1$ and $\mu_4=\mathrm{i}$.
The top panels show the real and imaginary parts computed using a direct integration 
approach, while the middle panels show the corresponding real and imaginary parts 
computed using the Grassmann--P\"oppe method. 
The bottom left panel shows the magnitude of the difference between the 
two computed solutions. The bottom right panel shows the evolution
of the magnitude of the Fredholm determinant of $Q=Q(x,t)$.}
\label{fig:qNLS}
\end{figure}

\begin{remark}[Periodic boundary conditions]
Our main result in Theorem~\ref{thm:main} concerned
the fourth order quintic nonlinear Schr\"odinger equation
on the real line. The numerical simulations above are based
on Fourier spectral approximations on the domain $[-L/2,L/2]$
with periodic boundary conditions. Indeed in the first
step of the Grassmann--P\"oppe method above we generate approximate
solutions to the linear ``base'' partial differential equation
by taking the inverse fast Fourier transform of the exact spectral representation
for the solution. Our numerical simulations shown in Figure~\ref{fig:qNLS}
demonstrate the Grassmann--P\"oppe method appears to work perfectly well 
in the periodic context. However this does require further investigation,
both analytically and numerical analytically. 
\end{remark}

\begin{remark}[Korteweg de Vries and nonlinear Schr\"odinger]
This Grassmann--P\"oppe method was used to generate approximate
solutions to the standard Korteweg de Vries and nonlinear Schr\"odinger equations in
Doikou \textit{et al.\/} \cite{DMSW20}. 
\end{remark}

\begin{remark}[Initial data]\label{rmk:initialdata}
In principle we could generate the numerical solutions
using the Grassmann--P\"oppe approach from given initial
data $g_0=g_0(x)$ for the kernel function $g=g(0,0;x,0)$.
The initial data $p_0$ for the linearised partial differential system
for $p=p(x,t)$ can be computed from $g_0$ via `scattering',
as suggested for example by McKean~\cite{McKean}. 
\end{remark}  

\begin{remark}[Grassmann--P\"oppe single time evaluation]
We emphasise the following efficiency property of the Grassmann--P\"oppe method.
Given the initial data $p_0$, we compute its fast Fourier transform $\hat p_0=\hat p_0(k)$,
for a finite set of wavenumbers $k$. We then advance the individual $k$-modes of $\hat p_0$
to any given time $t>0$ via the Fourier flow map
$\exp\bigl(\Delta t(\mu_2(2\pi\mathrm{i}k)^2+\mu_3(2\pi\mathrm{i}k)^3+\mu_4(2\pi\mathrm{i}k)^4)\bigr)$,
corresponding to the linear partial differential equation prescribing $p=p(x,t)$.
We can thus generate the Fourier coefficients $\hat p=\hat p(k,t)$ at any
given time in one single step. We generate an approximation for $p=p(x,t)$
by then computing the inverse fast Fourier transform. And then finally
we can generate $g=g(y,z;x,t)$ by solving the linear Fredholm equation
at that time $t>0$, as described above. In contrast, the direct numerical
simulation method requires computing the solution over successive small time steps
to evaluate the solution at any time $t>0$.
\end{remark}

\section{Discussion}\label{sec:discussion}
The advantages of the method we present to establish integrability for the
generalised non-commutative fourth order quintic nonlinear Schr\"odinger equation, based on P\"oppe's Hankel
operator approach are as follows. First, the method is abstract. Once the
Fredholm equation $P=G(\id+Q)$ is established the computation proceeds
entirely at the operator level. The key initial ingredients are that the
scattering data $P$ is a Hankel operator and depends on the parameters $x$ and $t$,
and satisfies an evolution in $t$ linear equation involving a derivation operation with respect to $x$.
And then the auxiliary data $Q$ is assigned appropriately in terms of $P$.
Second, with this in hand, we can proceed in the
operator algebra, to which a derivation operation can be applied, once we
endow that algebra with a kernel product rule associated with the Hankel components.
The procedure to establish a closed-form nonlinear
kernel equation is then direct and elementary, only requiring basic calculus.

The observant reader will have noticed in the proof of Theorem~\ref{thm:main}
that, once we computed $\pa_tG-d(\pa)G$ in Step~1, the remaining Steps 2--9
in the proof were a collating exercise, once we applied the kernel
bracket operator at the very beginning of Step~2.
Indeed retrospectively we note the following. Step~2 dealt with the terms
with factor $\mu_2$, i.e.\/ those associated with the second order part of $d(\pa)$.
The key identity in Step~2 helping to establish a closed nonlinear form is identity (i) 
in Lemma~\ref{lemma:keyidentities} for $\pa[PU\tP]$. Step~3 dealt with the terms
with factor $\mu_3$, i.e.\/ those associated with the third order part of $d(\pa)$.
The key identity in Step~3 that established a closed nonlinear form is identity (ii) 
in Lemma~\ref{lemma:keyidentities} for $\pa[PU(\pa\tP)]$, in addition to identity (i).
Then in Steps~4--8, which dealt with the terms with factor $\mu_4$, the key identity
was (iii) in Lemma~\ref{lemma:keyidentities} for $\pa[PU(\pa^2\tP)]$, in addition to the previous two.
Indeed the main work in establishing our main result in Theorem~\ref{thm:main}
was the proof of identities (i)--(iii) in Lemma~\ref{lemma:keyidentities}.
It is likely the proof of the Lemma can be simplified further. 
This suggests a key identity for the quintic order case, i.e.\/ when
the order of $d(\pa)$ is five, will involve an analogous expression 
for $\pa[PU(\pa^3\tP)]$.
This is the last case presented in Nijhoff \textit{et al.\/} \cite{NQLC}.
An explicit closed-form expression for all orders is obviously of interest.
This could perhaps be achieved via a non-commutative generalisation of the recursion relation
for the nonlinear Schr\"odinger hierarchy,
see for example P\"oppe~\cite{P84} or Matveev and Smirnov~\cite{MatveevSmirnov},
and/or via an algebraic combinatorial approach using algebraic structures
analogous to those in Malham and Wiese~\cite{MW},
Ebrahimi--Fard \textit{et al.}~\cite{E-FLMM-KW}
or Ebrahimi-Fard \text{et al.}~\cite{E-FMPW}.

Some final observations are as follows. In Section~\ref{sec:preliminaries}
we discussed how a solution to the linear Fredholm equation for $G$ exists
provided the determinant $\det(\id+Q)\neq0$. This is guaranteed locally
in time under the conditions stated therein. Recall $Q$ is prescribed directly
and solely in terms of $P$ and $P^\dag$. Hence the evolution of $P$ and thus
the determinant $\det(\id+Q)$ determines the existence of $G$. If the
determinant becomes zero at some time $t$ then the solution $G$ may
become singular. More specifically, depending on the route to singularity,
certain eigenvalues of $G$ will become singular; see Beck and Malham~\cite{BM}.
However, such singular behaviour in the context of Grassmannian flows simply
indicates a poor choice of representative coordinate patch. By changing
to another suitable patch, which is always possible, the solution can
be continued. A careful analysis of this scenario is required. 
Additionally a careful numerical analysis of the Grassmann--P\"oppe
method we presented in Section~\ref{sec:numericalsimulations} is
also required.


\section*{Acknowledgement}
SJAM would like to thank Anastasia Doikou and Ioannis Stylianidis for stimulating discussions,
as well as the anonymous referees for their very constructive comments and suggestions that helped
improve the original manuscript.

\end{document}